\newcounter{minutes}\setcounter{minutes}{\time}
\newcounter{hours}\setcounter{hours}{\time}
\title[V. N. Dubinin, M. Vuorinen/Ahlfors-Beurling conformal invariant]
{Ahlfors-Beurling conformal invariant and relative capacity of compact sets}
\author[]{Vladimir N. Dubinin$\dagger$}
\author[]{Matti Vuorinen$\ddagger$}
\address{Far Eastern Federal University, Vladivostok, Russia} \email{dubinin@iam.dvo.ru}
\address{Department of Mathematics, University of Turku, Turku 20014,
Finland} \email{vuorinen@utu.fi}
\thanks{$\dagger$The research of this author was supported by the
Russian Foundation for Basic Research, project 11-01-00038}
\thanks{$\ddagger$ Supported by the Academy of Finland, project 2600066611}
\newtheorem{theorem}[equation]{Theorem}
\newtheorem{remark}[equation]{Remark}
\newcounter{propcounter}
\newtheorem{property}[propcounter]{Property}
\numberwithin{equation}{section}
\newcommand{\rc}{{\rm rel cap}{\,}}
\newcommand{\hc}{{\rm  hcap}{\,}}
\begin{document}

%=========================================================================
%%%%%%%% BEGIN TIMESTAMP
\def\thefootnote{}
\footnotetext{ \texttt{\tiny File:~\jobname .tex,
          printed: \number\year-\number\month-\number\day,
          \thehours.\ifnum\theminutes<10{0}\fi\theminutes}
} \makeatletter\def\thefootnote{\@arabic\c@footnote}\makeatother
%%%%%%%% END TIMESTAMP
%===============================================================================

\maketitle

%===============================================================================
\begin{abstract}
For a given domain $D$ in the extended complex plane $\overline{\mathbb C}$
with an accessible boundary point  $z_0 \in \partial D$ and for a
subset $E \subset {D},$ relatively closed
w.r.t. $D\,,$ we define the relative capacity $\rc E$ as a
coefficient in the asymptotic expansion of the Ahlfors-Beurling
conformal invariant $r(D\setminus E,z)/r(D, z)$ when $z$ approaches
the point $z_0\,.$ Here $r(G,z)$  denotes the inner radius at $z$ of the
connected component of the set $G$ containing the point $z\,.$
The asymptotic behavior of this quotient is established.
Further, it is shown that in the
case when the domain $D$ is the upper half plane and $z_0=\infty$
the capacity $\rc E$ coincides with the well-known
half-plane capacity ${\hc} E\,.$  Some properties of the
relative capacity are proven, including the behavior of this
capacity under various forms of symmetrization and under some other
geometric transformations. Some applications to bounded holomorphic
functions of the unit disk are given.

\end{abstract}
%===============================================================================
\noindent
{\bf Keywords} {Conformal invariant, inner radius, holomorphic function, Schwarzian derivative.}

\noindent
{\bf Mathematics Subject Classification 2000} {30C85} %, 60J67}

%===============================================================================
\section{Introduction }
%===============================================================================
Let $D_1$ and $D_2$ be domains having Green functions in the extended complex plane $ \overline {\mathbb C} ,$  and let the point $z \in D_1 \subset D_2\,.$ We denote by $r(D_k,z)$ the inner radius of the
domain $D_k, k=1,2,$ at the point $z$ (see e.g. \cite{h}, \cite{d0}).
The quotient $r(D_1,z)/r(D_2,z)$ is conformal invariant in the sense
that for every conformal map $f$ of $D_2$ we have the equality
\begin{equation*} %\label{1.1}
\frac{r(D_1,z)}{r(D_2,z)} =
\frac{r(f(D_1),f(z))}{r(f(D_2),f(z))} \, .
\end{equation*}
The study of this kind of invariant expressions goes back to the
works of Ahlfors and Beur\-ling \cite[p.436]{a}. Some significant
applications to geometric theory of functions are given
in (\cite{a,ab,he,o,p}). In this paper we study the behavior of the
invariant $r(D_1,z)/r(D_2,z)$ when the point $z$ tends to a given
common boundary point of the domains $D_1$ and $D_2\,.$ More precisely, we investigate the following situation. Let $D$ be a domain in  $ \overline {\mathbb C} $, and let $z_0$ be an accessible
boundary point of the domain $D\,.$ Suppose that in a neighborhood of the point $z_0$ the boundary $\partial D$ is represented by
an analytic arc $\gamma$ (in the case $z_0= \infty$ it is required that the image of the arc $\gamma$ under the mapping $z \mapsto 1/z$ be analytic.)
Consider an arbitrary set $E \subset D,$ relative closed with respect to $D$ such that the inner distance $\rho(E, z_0)$ from the point $z_0$ to the set $E$ with respect to the domain $D$ is positive.
In the case of a finite point $z_0,$ {\it the relative capacity} $\rc  E$ of the set $E$ is defined via the following
asymptotic expansion
\begin{equation} \label{1.2}
\frac{r(D \setminus E,z)}{r(D,z)} = 1-  2 \,(\rc  E) |z-z_0|^2+ o(|z-z_0|^2)\,, \quad z \to z_0\,,
\end{equation}
where the approach of $z$ to the point $z_0$ takes place along an arbitrary
arc in $D\,$ perpendicular to $\gamma$ at the point $z_0\,.$
Here $r(G,z)$  denotes the inner radius of the
connected component of the set $G$ containing the point $z\,.$
If the point $z_0=\infty,$ then the parameter $|z-z_0|$ in the
definition of the relative capacity $\rc  E$ is replaced
by $|1/z|\,.$ We say that $\rc  E$ is {\it the relative capacity of the set} $E$ with respect to the domain $D$ at the point
$z_0\,.$ We shall establish the asymptotic expansion
\eqref{1.2} in Section 2. It will also be shown that in the case when the domain $D$ is the upper half plane $H$ and the point $z_0=\infty$ we have
\begin{equation*} %\label{1.3}
\rc  E = {\hc}\,E \,,
\end{equation*}
where ${\hc}\,E $ is {\it the half-plane capacity} of the set
$E$ \cite{l1} \,. In the case when the set $H \setminus E$ is a simply-connected domain, the equality of the relative and half-plane
capacities was proven in \cite{d1}, with essential use of conformal
mapping (cf. \cite[formula (3)]{d1}). We recall further that the
notion of half-plane capacity arises naturally in statistical physics
when examining the Schramm-L\"owner equations (see G. Lawler \cite{l1,l2,l3}). There
are several definitions of this capacity in the literature. For example, let $G= H \setminus E$ and
\begin{equation*} %\label{1.4}
\phi_G(z)= {\rm Im} z - {\bf E}^z[{\rm Im} (B_{\tau_G})]
\end{equation*}
where $B_t$ is a standard Brownian motion and $ \tau_G= \inf \{  t: B_t \notin G \}\,,$
%and {\bf
and ${\bf E}^z$ is the mathematical expectation.
Then $\phi_G$ is a positive harmonic function on $G$ that vanishes at the regular points of the boundary $\partial G$ and such that
\begin{equation*} %\label{1.5}
\phi_G(z)= {\rm Im} z + O(|z|^{-1})\,, \quad z \to \infty \,.
\end{equation*}
The half-plane capacity (at infinity) of $E$ is defined by

\begin{equation} \label{1.6}
\phi_G(z)= {\rm Im} (z + \frac{{\hc}\, E}{z})+ o(|z|^{-1})\,, \quad z \to \infty \,.
\end{equation}

\noindent
(see \cite[Lecture 2]{l2}). Some new geometric properties of the half-plane capacity were proved in the recent paper \cite{d1}. Following this paper we shall examine the properties of
the relative capacity ${\rc}\,E$ under various geometric transformations of the
set $E$ in the case when the domain $D$ is the disk and the point
$z_0=1\,$(Section 3). Our results do not reduce
 to the corresponding statements of \cite{d1} in the particular case $D=H, z_0=\infty\,,$
because the symmetrization procedures applied here are not invariant under M\"obius transformations. Finally,
in Section 4 there are given some applications of the introduced capacity to holomorphic
functions. The results obtained here turn out to be effective in the study of a boundary
version of the Schwarz lemma involving the Schwarzian derivative (see, e.g. \cite{s,tv,d2}). After the completion of the writing of this
paper, we learned about a very recent paper of S. Rohde and C. Wong, also studying half-plane capacity \cite{rw}.

%=====================================================================
%%%%%%%%%%%%%%%%%%%%%%%%%%
%%%%%%%%%%%%%%%%%%%%%%%%%%
%%%%%%%%%%%%%%%%%%%%%%%%%%
%===============================================================================
\section{Existence and properties of relative capacity}
%===============================================================================
%%%%%%%%%%%%%%%%%%%%%%%%%%
%%%%%%%%%%%%%%%%%%%%%%%%%%
%%%%%%%%%%%%%%%%%%%%%%%%%%

Following the proof of Lemma 1 in the paper \cite{d1} we first establish the
existence of the asymptotic expansion \eqref{1.2}.

\begin{theorem} \label{2.1} Let $D$ be a domain in the extended complex plane $\overline{\mathbb C}$, and let $z_0$ be a finite accessible
boundary point of the domain $D\,.$ Suppose that in some neighborhood of   $z_0$ the
boundary $\partial D$ is represented by an analytic arc $\gamma\,.$ Then for every
set $E$ relatively closed with respect to $D\,$ and with $\rho(E,z_0)>0\,,$ the following expansion holds
\begin{equation}
\frac{r(D \setminus E, z)}{r(D , z)} = 1 - c |z- z_0|^2 +o(|z- z_0|^2 )\,, \quad z \to z_0\,,
\end{equation}
where the convergence of $z$ to the point $z_0$ takes place along a path, perpendicular
to the arc $\gamma$ at $z_0\,$ and where $c\ge 0$ is a constant, depending only on the set
$E$\,, the domain $D\,,$ and on the point $z_0\,.$
\end{theorem}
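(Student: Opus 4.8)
The plan is to rewrite the quotient in terms of Green functions and to isolate the leading term by a double use of the fact that the pole and the evaluation point both tend to $z_0$. Recall that the inner radius is recovered from the Green function $g_G(\cdot,z)$ of $G$ (with pole at $z$) through the expansion $g_G(w,z)=\log\frac{1}{|w-z|}+\log r(G,z)+o(1)$ as $w\to z$, so that $\log r(G,z)$ is the Robin constant. Setting
\begin{equation*}
h(w,z):=g_D(w,z)-g_{D\setminus E}(w,z),
\end{equation*}
the logarithmic singularities at $w=z$ cancel and $h(\cdot,z)$ is harmonic on the component of $D\setminus E$ containing $z$ (for $z$ near $z_0$ this is legitimate because $\rho(E,z_0)>0$). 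Comparing the regular parts at $w=z$ gives
\begin{equation*}
v(z):=\log\frac{r(D,z)}{r(D\setminus E,z)}=h(z,z)\ge 0,
\end{equation*}
the inequality being the monotonicity of the inner radius under $D\setminus E\subset D$. Since $h(\cdot,z)$ vanishes on the regular points of $\partial D$ and equals $g_D(\cdot,z)$ on $\partial E\cap D$, solving this Dirichlet problem yields the representation
\begin{equation*}
v(z)=h(z,z)=\int_{\partial E}g_D(\zeta,z)\,d\omega(z,\zeta),
\end{equation*}
where $\omega(z,\cdot)$ is the harmonic measure of $\partial E$ with respect to $D\setminus E$ evaluated at $z$. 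The whole problem is thus reduced to estimating this integral as $z\to z_0$.

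Next I would extract a factor $|z-z_0|$ from each of the two ingredients. Because $z_0$ lies on the analytic arc $\gamma$ and $\zeta$ ranges over the compact set $\partial E$, which stays at positive distance $\rho(E,z_0)$ from $z_0$, Schwarz reflection across $\gamma$ lets me continue $g_D(\zeta,\cdot)$ past $z_0$; since $g_D(\zeta,\cdot)$ vanishes on $\gamma$, it vanishes there to first order, and along a path perpendicular to $\gamma$ one gets
\begin{equation*}
g_D(\zeta,z)=a(\zeta)\,|z-z_0|+o(|z-z_0|),
\end{equation*}
uniformly in $\zeta\in\partial E$, where $a(\zeta)\ge 0$ is the normal derivative at $z_0$ of the Green function in its pole variable (a Poisson-type kernel). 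Likewise $z\mapsto\omega(z,\partial E)$ is a positive harmonic function on $D\setminus E$ that vanishes on $\partial D$ near $z_0$; by the same boundary regularity it vanishes to first order at the analytic point $z_0$, so that $d\omega(z,\zeta)=b(\zeta)\,|z-z_0|\,|d\zeta|+o(|z-z_0|)$ uniformly on $\partial E$, with $b(\zeta)\ge 0$.

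Substituting the two expansions and using the uniformity on the compact set $\partial E$ to integrate the error terms, I obtain
\begin{equation*}
v(z)=|z-z_0|^2\int_{\partial E}a(\zeta)\,b(\zeta)\,|d\zeta|+o(|z-z_0|^2)=c\,|z-z_0|^2+o(|z-z_0|^2),
\end{equation*}
with $c\ge 0$; the nonnegativity is already forced by $v\ge 0$. Exponentiating, $r(D\setminus E,z)/r(D,z)=e^{-v(z)}=1-c\,|z-z_0|^2+o(|z-z_0|^2)$, which is the claim (with $c=2\,\rc E$). I expect the main obstacle to be the two first-order vanishing statements above, and in particular making them hold \emph{uniformly} in $\zeta$ and with a genuine coefficient rather than a mere $O$-bound: this is exactly where the analyticity of $\gamma$ (to justify reflection and the Hopf-type first-order vanishing) and the perpendicularity of the approach (to guarantee $\mathrm{dist}(z,\partial D)=|z-z_0|+o(|z-z_0|)$, and hence the clean power $|z-z_0|^2$) enter; the hypothesis $\rho(E,z_0)>0$ keeps $\partial E$ away from the singularities and secures the uniformity, following the scheme of Lemma 1 in \cite{d1}.
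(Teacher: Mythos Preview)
Your strategy—representing $\log[r(D,z)/r(D\setminus E,z)]$ as a harmonic-measure integral of $g_D(\cdot,z)$ over $\partial E$ and then extracting one factor of $|z-z_0|$ from each ingredient—is sound in outline and genuinely different from the paper's argument. The paper instead maps $D$ conformally so that $\gamma$ becomes a real segment with $z_0\mapsto 0$, reflects $f(D)\setminus f(E)$ across that segment to obtain a symmetric domain $G$, and uses the identity $g_{f(D)\setminus f(E)}(w,f(z))=g_G(w,f(z))-g_G(w,\overline{f(z)})$. A second-order Taylor expansion of the regular part of the Green function of $G$ in the four real variables $(u,v,\xi,\eta)$ about the origin then produces the $|z-z_0|^2$ term directly as a specific mixed second partial. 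This avoids harmonic measure altogether and places no regularity burden on $E$.

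Your argument, as written, has a genuine gap at the step $d\omega(z,\zeta)=b(\zeta)\,|z-z_0|\,|d\zeta|+o(|z-z_0|)$. Nothing in the hypotheses guarantees that $\partial E$ is rectifiable, let alone that harmonic measure on it is absolutely continuous with respect to arc length; $E$ could be a Cantor-type compact of positive capacity. The fix is not to expand the \emph{measure} but only the \emph{integrand}: write $g_D(\zeta,z)=a(\zeta)\,|z-z_0|+R(\zeta,z)$ with $|R(\zeta,z)|\le C|z-z_0|^2$ uniformly for $\zeta$ bounded away from $z_0$ (this is where reflection across the analytic arc enters), and observe that $\psi(z):=\int_{\partial E} a(\zeta)\,d\omega_{D\setminus E}(z,\zeta)$ is itself a bounded harmonic function on $D\setminus E$ vanishing on $\partial D$ near $z_0$, hence admits its own first-order expansion $\psi(z)=c'|z-z_0|+o(|z-z_0|)$ by the same reflection argument. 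The remainder contributes $O(|z-z_0|^2)\cdot\omega(z,\partial E)=O(|z-z_0|^3)$, and you recover $v(z)=c'|z-z_0|^2+o(|z-z_0|^2)$ without any density assumption. Once repaired this way, your route yields $c$ as a concrete potential-theoretic quantity; the paper's route has the advantage of being pure Taylor calculus after a single reflection, with no integration against harmonic measure at all.
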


\begin{proof}
We consider the function $f$ mapping  $D$ conformally and univalently onto a domain
$f(D)\,$ lying in the upper half plane $H\,.$  Extending $f$ to the arc $\gamma$
in the sense of the boundary correspondence we may assume that $f(\gamma)$ is a finite interval of the real
axis and that $f(z_0)=0, |f'(z_0)|=1\,.$  Let $g(w, \zeta)$ be the Green function of the
connected component of the symmetric set
$$
 G= \{     w:  w \in  f(\gamma)  \cup  f(D) \setminus  f(E)  \,\quad {\rm or}  \, \quad
\overline{w}  \in  f(D) \setminus  f(E) \}\,,
$$
that contains the origin and let $h(w,\zeta)=\log |w-\zeta| + g(w,\zeta) \,$ be the
regular part of this function. By the symmetry of $G\,,$  we have
 $$
g_{f(D) \setminus f(E)}(w, f(z))\equiv g(w, f(z))- g(w, \overline{f(z)}) \,.
$$
Adding $\log |w-f(z)|$ to both sides and letting $w\to f(z)\,,$ this
relation gives
$$
\log r(f(D)\setminus f(E), f(z)) = \log r(G, f(z)) -g(f(z), \overline{f(z)})\,,
$$
and hence
\begin{equation*} %\label{2.3}
r(f(D)\setminus f(E), f(z))=  r(G, f(z)) \exp \{ -g(f(z), \overline{f(z)}) \}=
\end{equation*}
$$
 \{ r(G, f(z)) \,  r(G, \overline{f(z)})  \exp \{ -2g(f(z), \overline{f(z)}) \}   \}^{1/2} =
$$
\begin{equation} \label{2.4a}
 \exp \{ \frac{1}{2}  \left[  h(f(z), f(z)) + h(\overline{f(z)}, \overline{f(z)}) - 2  h({f(z)}, \overline{f(z)})     +2 \log |f(z)-  \overline{f(z)}| \right] \} \,.
\end{equation}
We consider the function $h(w,\zeta)$ as a function of four real arguments
$h(u,v,\xi, \eta)$ and introduce the notation $f(z) = \Delta u + i \Delta v  \,.$ It is clear that when $z \to z_0$ along an arc
perpendicular to $\gamma\,,$ we have $\Delta v\to 0 $  and
$\Delta u =o(\Delta v)\,,  $  $\Delta v\to 0 \,. $ The symmetric
difference enclosed within the square brackets in \eqref{2.4a}
has the following form in the new notation

\begin{equation} \label{2.4}
 h(\Delta u, \Delta v, \Delta u, \Delta v)+
 h(\Delta  u, -\Delta v, \Delta u, -\Delta v)-
 2 h(\Delta  u, \Delta v, \Delta u, -\Delta v) \,.
\end{equation}

\noindent
In view of the symmetry of the set $G$ with respect to the real
axis we see that
$$
h(u,v,\xi,\eta) = h(u,-v,\xi,-\eta) \,.
$$
Therefore at the point $(0,0,0,0)$
$$
\frac{\partial h}{\partial v} = \frac{\partial h}{\partial \eta} = 0 \,.
$$
Applying this fact and applying Taylor's formula in a neighborhood of the point $(0,0,0,0)\,,$ we have
$$
h(\Delta u, \Delta v, \Delta u, \Delta v) -h(0,0,0,0)=
\frac{\partial h}{\partial u} \Delta u + \frac{\partial h}{\partial \xi} \Delta u +
\frac{1}{2}\left[
\frac{\partial^2 h}{\partial u^2} (\Delta u)^2+ \right.
$$
$$
\frac{\partial^2 h}{\partial v^2} (\Delta v)^2  +
\frac{\partial^2 h}{\partial \xi^2}  (\Delta u)^2
+\frac{\partial^2 h}{\partial \eta^2}  (\Delta v)^2
+2\left( \frac{\partial h^2}{ \partial u \partial v  } \Delta u  \Delta v
  +
\frac{\partial^2 h}{\partial u \partial \xi } (\Delta u)^2 \right.
$$
$$ \left. \left.
+\frac{\partial^2 h}{\partial u \partial \eta} \Delta u \Delta v + %%%%
\frac{\partial^2 h}{\partial u \partial \xi} \Delta u \Delta v+\frac{\partial^2 h}{\partial v \partial \eta} (\Delta v)^2  +
\frac{\partial^2 h}{\partial \xi \partial \eta} \Delta u \Delta v \right)\right]+ o((\Delta v)^2), \quad \Delta v \to 0\,,
$$
$$
h(\Delta u, -\Delta v, \Delta u, -\Delta v) -h(0,0,0,0)=
\frac{\partial h}{\partial u} \Delta u + \frac{\partial h}{\partial \xi} \Delta u +
\frac{1}{2}\left[
\frac{\partial^2 h}{\partial u^2} (\Delta u)^2+ \right.
$$
$$
\frac{\partial^2 h}{\partial v^2} (\Delta v)^2  +
\frac{\partial^2 h}{\partial \xi^2}  (\Delta u)^2
+\frac{\partial^2 h}{\partial \eta^2}  (\Delta v)^2
+2\left( -\frac{\partial^2 h}{ \partial u \partial v  } \Delta u  \Delta v
  +
\frac{\partial^2 h}{\partial u \partial \xi } (\Delta u)^2 \right.
$$
$$ \left. \left. -\frac{\partial^2 h}{\partial u \partial \eta} \Delta u \Delta v -\frac{\partial^2 h}{\partial u \partial \xi} \Delta u \Delta v+\frac{\partial^2 h}{\partial v \partial \eta} (\Delta v)^2  -
\frac{\partial^2 h}{\partial \xi \partial \eta} \Delta u \Delta v \right) \right] +o((\Delta v)^2), \quad \Delta v \to 0\,,
$$
%%%%
$$
h(\Delta u, \Delta v, \Delta u, -\Delta v) -h(0,0,0,0)=
\frac{\partial h}{\partial u} \Delta u + \frac{\partial h}{\partial \xi} \Delta u +
\frac{1}{2} \left[\frac{\partial^2 h}{\partial u^2} (\Delta u)^2+ \right.
$$
$$
\frac{\partial^2 h}{\partial v^2} (\Delta v)^2  +
\frac{\partial^2 h}{\partial \xi^2}  (\Delta u)^2
+\frac{\partial^2 h}{\partial \eta^2}  (\Delta v)^2
+2\left( \frac{\partial h^2}{ \partial u \partial v  } \Delta u  \Delta v
  +
\frac{\partial^2 h}{\partial u \partial \xi } (\Delta u)^2 -\frac{\partial^2 h}{\partial u \partial \eta} \Delta u \Delta v +
\right.
$$
$$
\left. \left.
\frac{\partial^2 h}{\partial u \partial \xi} \Delta u \Delta v-\frac{\partial^2 h}{\partial v \partial \eta} (\Delta v)^2  -
\frac{\partial^2 h}{\partial \xi \partial \eta} \Delta u \Delta v \right) \right] +o((\Delta v)^2),  \quad \Delta v \to 0\,.
$$
Taking into account the expression in \eqref{2.4} and substituting
into  \eqref{2.4a} we arrive at the identity
$$
r(f(D)\setminus f(E), f(z)) = |2  \Delta v|
\exp \{ 2\,  \frac{\partial^2 h}{\partial v \partial \eta} (\Delta v)^2   +o((\Delta v)^2)\} =$$
$$
2 \Delta v  + c_1 (\Delta v)^3 + o((\Delta v)^3)\,,  \quad \Delta v \to 0\,.
$$
Here the constant
$$ c_1 = 4 \frac{\partial^2 h}{\partial v \partial \eta}$$
does not depend on the choice of the arc $\gamma$ because the
left hand side is independent of this arc.

Repeating the preceding argument and replacing  $f(D)\setminus f(E)$ with $f(D)$ we arrive at a similar relation
$$
r( f(D), f(z))= 2 \Delta v + c_2 \,(\Delta v)^2 + o((\Delta v)^3), \quad \Delta v \to 0\,.
$$
Therefore
$$
\frac{r(D \setminus E,z)}{r(D ,z)} =
{ \frac{r(f(D) \setminus f(E),f(z))}{r(f(D) ,f(z))}}
= 1+\frac{1}{2} (c_1-c_2) (\Delta v)^2 +  o((\Delta v)^2)=
$$
$$
1-c |z-z_0|^2 +  o(|z-z_0|^2)\,, \quad z \to z_0 \,.
$$
\bigskip

\noindent
Because the expression on the left hand side does not depend
on the choice of the function $f$ and does not exceed $1\,,$
we see that the constant $c :=-\frac{1}{2}(c_1-c_2)\ge 0$ is
independent of $f\,.$ The theorem is proved.
\end{proof}
\medskip

The asymptotic expansion in the case $z_0=\infty$ is contained
in what was proved above with the change of variable $z -z_0 \mapsto 1/z\,.$ Note that we did not only prove the existence of the
expansion \eqref{1.2}, but also established a representation of the relative capacity in terms of the Green functions of the domains in question.

\begin{remark} From the above proof it is clear that the requirement of the analyticity of the arc $\gamma$ can be weakened. It is
enough to require that the arc $\gamma$ have a tangent at the point $z_0$  and the function $f$ has an extension to the arc $\gamma$ and the following expansion holds
$$  f(z)-f(z_0) = a(z-z_0) +o((z-z_0)),\, \quad z \in D \cup \gamma, \quad z \to z_0\,,$$
where $a$ is some constant.
\end{remark}

Next we prove that in the case $D=H$ and $z_0=\infty$ the
relative capacity and the half-plane capacity coincide.

%Thm 2.6
\begin{theorem}
The capacity $\rc \, E$ of a bounded set,
relatively closed in the
half-plane $H\,,$
 at the point $z_0=\infty$ is equal
to the half plane capacity $\hc \, E\,.$
\end{theorem}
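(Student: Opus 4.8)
The plan is to reduce everything to the single normalized Green-function relation behind \eqref{2.4a} and then read off the coefficient directly from the definition \eqref{1.6} of the half-plane capacity. Set $G=H\setminus E$; it suffices to work in the unbounded component of $G$, which contains every $\zeta$ with $|\zeta|$ large and which alone determines both $r(H\setminus E,\zeta)$ and $\hc E$. Since $H$ needs no conformal map, I would first record that $r(H,\zeta)=2\,\mathrm{Im}\,\zeta$, because the Green function $g_H(w,\zeta)=\log|(w-\bar\zeta)/(w-\zeta)|$ has regular part $\log|\zeta-\bar\zeta|=\log(2\,\mathrm{Im}\,\zeta)$ at $\zeta$. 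Writing $g_G=g_H-h_\zeta$, the function $h_\zeta$ is harmonic in $G$ with $h_\zeta=0$ on $\mathbb R$ and $h_\zeta(w)=g_H(w,\zeta)$ on $\partial E$, and comparing regular parts gives $r(G,\zeta)=2\,\mathrm{Im}\,\zeta\cdot e^{-h_\zeta(\zeta)}$. Hence
\[ \frac{r(H\setminus E,\zeta)}{r(H,\zeta)}=e^{-h_\zeta(\zeta)}. \]
By Theorem \ref{2.1} (in the form for $z_0=\infty$) the coefficient is independent of the perpendicular path, so I would let $\zeta=iR$, $R\to\infty$, along the positive imaginary axis, where $|1/\zeta|=1/R$.

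Next I would expand the boundary data. For $w$ in the bounded set $\overline E$ and $\zeta=iR$ a direct computation gives $g_H(w,iR)=\tfrac{2\,\mathrm{Im}\,w}{R}+O(R^{-3})$ uniformly on $\overline E$. Thus $h_\zeta=\tfrac2R\,\psi+R_{\mathrm{err}}$, where $\psi$ is the bounded harmonic function on $G$ with $\psi=\mathrm{Im}\,w$ on $\partial E$ and $\psi=0$ on $\mathbb R$, and $R_{\mathrm{err}}$ is harmonic with boundary data $O(R^{-3})$ on $\partial E$ and $0$ on $\mathbb R$; by the maximum principle $R_{\mathrm{err}}=O(R^{-3})=o(R^{-2})$ everywhere, in particular at $\zeta$. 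The essential observation is that $\psi$ is exactly the harmonic extension $\mathbf E^{z}[\mathrm{Im}(B_{\tau_G})]$ occurring in the definition of $\phi_G$, so that $\psi=\mathrm{Im}-\phi_G$.

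Finally I would insert the definition \eqref{1.6}. Since $\phi_G(z)=\mathrm{Im}(z+\hc E/z)+o(|z|^{-1})$ and $\mathrm{Im}(\hc E/z)=-\hc E\,\mathrm{Im}(z)/|z|^2$, we get $\psi(z)=\hc E\,\mathrm{Im}(z)/|z|^2+o(|z|^{-1})$, hence $\psi(iR)=\hc E/R+o(1/R)$. Combining, $h_\zeta(iR)=\tfrac2R(\hc E/R+o(1/R))+O(R^{-3})=2\,\hc E/R^2+o(R^{-2})$, so
\[ \frac{r(H\setminus E,\zeta)}{r(H,\zeta)}=1-\frac{2\,\hc E}{|\zeta|^2}+o(|\zeta|^{-2}),\qquad \zeta=iR\to\infty, \]
and comparison with \eqref{1.2} yields $\rc E=\hc E$. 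I expect the main obstacle to be the error control: one must verify that the higher-order terms of the Green-function expansion (collected in $R_{\mathrm{err}}$) and the choice of perpendicular direction do not perturb the $|\zeta|^{-2}$ coefficient, and one must identify $\psi$ precisely with the harmonic function hidden inside $\phi_G$. The reduction to the unbounded component of $H\setminus E$ and the uniformity of the expansion of $g_H$ on $\overline E$ are the supporting technical points.
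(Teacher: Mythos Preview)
Your argument is correct and is essentially the paper's proof carried out without the preliminary M\"obius inversion $\zeta=-1/z$: your error term $R_{\mathrm{err}}$ coincides (up to sign and the change of variable) with the auxiliary harmonic function $u$ that the paper bounds via the maximum principle, and your identification $\psi=\mathrm{Im}-\phi_G$ is precisely the relation $\tilde\phi(\zeta)+\mathrm{Im}(1/\zeta)=-\psi(-1/\zeta)$ that motivates the paper's choice of $u$. Working directly in $H$ makes the link between $\psi$ and the Brownian expectation $\mathbf{E}^{z}[\mathrm{Im}\,B_{\tau_G}]$ more transparent, but the analytic content is the same.
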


\begin{proof}  %%%%
The expansions \eqref{1.2} and \eqref{1.6} take the following
form after the change of variable $\zeta= -1/z$
\begin{equation} \label{2.5}
\frac{r(\tilde{G}, i \eta)}{2 \eta} = 1 -2 \, (\rc \, E) \, \eta^2 + o(\eta^2)\,, \quad  \eta \to 0\,,
\end{equation}

\begin{equation} \label{2.6}
\tilde{\phi}(\zeta)= {\phi}_G(-1/\zeta)=
- {\rm Im} \,1/\zeta - {\rm Im} \,(\hc \, E )\zeta + o(\zeta)\,,
 \quad  \zeta \to 0\,,
\end{equation}
where $\tilde{G}= \{ \zeta\,:\, -1/\zeta \in G \},\, G= H \setminus E, \zeta= \xi+i \eta \,.$ Consider the harmonic function
$$
u(\zeta)= g_{\tilde{G} }(\zeta, i \eta) - g_{H }(\zeta, i \eta)
-2 \, \eta \left[  \tilde{\phi}(\zeta)+  {\rm Im} \frac{1}{\zeta} \right] \,,  \quad \zeta \in \tilde{G}\,.
$$
At the regular boundary points of the boundary of the domain
$\tilde{G}$ we have
$$
u(\zeta)= \log \left|  \frac{\zeta -i \eta}{\zeta +i \eta}\right|
-2 \eta \, {\rm Im} \frac{1}{\zeta} =
 \log \left|  \frac{1 +i z \eta}{1- i z \eta}\right| + 2 \,\eta \, {\rm Im} \,z=
$$
\medskip
$$
\log \left| (1+ iz \eta)(1+ iz \eta - z^2 \eta^2 + o(\eta^2))   \right| + 2 \,\eta \, {\rm Im} \,z=
$$
\medskip
$$
\log \left| 1+ 2 \,i\,z \eta -2\, z^2 \eta^2 + o(\eta^2)   \right| + 2 \,\eta \, {\rm Im} \,z=
$$
\medskip
$$
\log \left| 1- 2 \,y \eta +2\,( y^2 -x^2) \eta^2 +i[2x \eta -4xy \eta^2] + o(\eta^2))   \right| + 2 \,\eta \, y=
$$
$$
\frac{1}{2} \, \log (1 - 4 y \eta + 8 y^2 \eta^2 +  o(\eta^2))
+ 2 \eta y = o(\eta^2), \quad  \eta \to 0\,,
$$
$(z = x+ iy)\,.$ According to the maximum principle we have
$$ u(i\eta)= o(\eta^2), \quad  \eta \to 0\,.  $$
This relation together with \eqref{2.6} give
$$
 \log \frac{r(\tilde{G},i \eta)}{ r(H,i \eta)} + 2 \,(\hc \, E)
 \eta^2 =  o(\eta^2)\,,
$$
which in view of \eqref{2.5} gives
$$  \rc \, E = \hc \, E \,.$$
The theorem is proved.
\end{proof}

We next record some immediate properties of the relative capacity that
follow from the properties of the inner radius and the expansion
\eqref{1.2} (cf. \cite{l1}). In what follows the domain $D$ and the point $z_0$ are fixed.

%%%%%%%%%%%%%%%%%%

%\end{equation}

% Property 1
\begin{property} \label{2.8} {\rm (Monotonicity)} If $E_1 \subset E_2\,,$ then
$$  \rc \, E_1 \le \rc \, E_2 \,.$$
\end{property}
\begin{proof}
The proof is a consequence of the monotonicity of the inner radius
$$
r(D \setminus E_1, z) \ge r(D\setminus E_2, z)
$$
which follows from the inclusion $ D \setminus E_2 \subset D \setminus E_1\,.$
\end{proof}

% Property 2
\begin{property} \label{2.9}  {\rm  (Choquet's inequality)} For all
 $E_1 , E_2\,,$ the inequality
$$  \rc \, E_1 + \rc \, E_2  \ge \rc \, (E_1 \cup E_2) +
\rc \, (E_1 \cap E_2)
\,.$$
holds.
\end{property}
\begin{proof}
The proof follows from an inequality of Renggli (see \cite{r}) and
the formula \eqref{1.2}.
\end{proof}

Let the domain $D$ be symmetric with respect to the imaginary axis and let
$z_0$ be a point on this axis, $z_0 \in \partial D\,.$ For a given set $E \subset D$ we define the set
$$
 P \, E =(E \cup E^*)^+ \cup ( E \cap E^*)^- \,,
$$
where $A^*$ denotes a set symmetric to $A$ with respect to the imaginary
axis, whereas  $A^+(A^-)$ is the intersection of $A$ with the right (left)
closed half plane.
%%%%

% Property 3
\begin{property} \label{2.10} {\rm  (Polarization principle)} The following
inequality holds
$$  \rc \, E  \ge \rc \, P \, E
\,.$$
\end{property}
\begin{proof}
Consider the set
$$
P_c(D \setminus E) = ((D \setminus E) \cup (D \setminus E)^*)^- \cup
((D \setminus E) \cap (D \setminus E)^*)^+ \,.
$$
It is easy to see that
$$
P_c(D \setminus E)= D \setminus P\, E.
$$
According to Corollary 1.2 of \cite{d0}
$$
r(P_c(D \setminus E), z) \ge r(D \setminus E, z)
$$
for every point of the imaginary axis, contained in $D \setminus E\,.$
It remains just to apply the formula \eqref{1.2}.
\end{proof}

% Property 4
\begin{property} \label{2.11} {\rm (Composition principle)} Under the hypotheses
and notations introduced before Property {\rm \ref{2.10}} we have the following inequality
$$ 2\, \rc \, E  \ge \rc \, (E^+ \cup (E^+)^*) +
\rc \, (E^- \cup (E^-)^*)
\,.$$
\end{property}
\begin{proof}
According to Theorem 1.9 of \cite{d1}
$$
r^2(D\setminus E, z) \le r(D \setminus  (E^+  \cup (E^+)^* ),z)
r(D \setminus  (E^-  \cup (E^-)^* ),z)
$$
for the points of the imaginary axis. For both parts of the inequality for
$r^2(D,z)$ we apply the formula \eqref{1.2}, { and arrive at the desired conclusion.}
\end{proof}

% Property 5
\begin{property} \label{2.11}
Given a relatively closed subset $E$ of $D\,,$ there exists a sequence
of open sets $  \{ B_n\}_{n=1}^{\infty}$ such that $E \subset B_n,
n=1,2,\dots,$  and
$$ \lim_{n\to \infty} \rc(\overline{B}_n  \cap D) = \rc \, E
\,.$$
\end{property}
\begin{proof}
The proof is essentially contained in the proof of a particular
case (Lemma 3 of the paper \cite{d1}) noting the representation
of the relative capacity in terms of the Green function
(see the end of the proof of Theorem \ref{2.1} ).
\end{proof}

% Property 6
\begin{property} \label{2.12}
Suppose that $f$ maps a domain $D$ onto a domain $f(D)$ conformally
and univalently such that a boundary point $z_0$ is mapped in the
sense of boundary correspondence to a point $w_0 \in \partial f(D)\,.$ We assume further that in a neighborhood of the
 point $z_0$ the boundary $\partial D$ is an analytic
 arc and the boundary $\partial f(D)$ in a neighborhood of $w_0$ is
 also an analytic arc. If, furthermore, $z_0=w_0=\infty$ and
 $\lim_{z \to \infty} \,f(z)/z =a\,,$ then
 $$
 \rc f(E) = |a|^2 \rc E \,,
 $$
and if $z_0$ and $w_0$ are finite points, then
$$
\rc f(E) = (\rc E)/|f'(z_0)|^2\,,
$$
 for every relatively closed subset $E\subset D \,, \rho(E,z_0)>0\,.$
\end{property}

The proof follows from the formula \eqref{1.2}\,.

\bigskip
%%%%%%%%%%%%%%%%%%%%%%%%%%
%%%%%%%%%%%%%%%%%%%%%%%%%%
%%%%%%%%%%%%%%%%%%%%%%%%%%
\section{The behavior of the relative capacity under geometric
transformations of subsets in the disk}

%%%%%%%%%%%%%%%%%%%%%%%%%%
%%%%%%%%%%%%%%%%%%%%%%%%%%
%%%%%%%%%%%%%%%%%%%%%%%%%%

It would be an interesting problem to study the behavior of the relative capacity under simultaneous geometric
transformations of the domain $D$ and the set $E \subset D\,.$
However, this problem appears to be very difficult. Therefore we restrict ourselves here only to the case when the domain $D$ is
fixed. Furthermore, throughout this section, the domain $D$ is the
unit disk $U= U_z=\{z:  |z|<1\}\,,$ the point $z_0=1,$ $E$ is a
relatively closed subset of $U,$ with $\rho(E, 1) > 0\,,$
%having a positive inner distance from
%the point $z=1\,$
and $\rc  E$ stands for the relative capacity of
$E$ with respect to the disk $U$ at the point $z=1\,.$

We recall the definition of the circular symmetrization of closed and open sets with respect to a given ray (see \cite{ps,h,d0}). Given a real number $a$ let $\gamma_r(a)$ be the circle $|z-a|=r$ (for $0<r<\infty$), which degenerates to the point $a$ (or $\infty$) if
$r=0$ ($r=\infty$, resp.). By the {\it circular symmetrization} of a
closed set $F \subset \overline{\mathbb C}$ with respect to the ray
$[-\infty,a]$ we mean the transformation of this set onto a symmetric
set ${\rm Cr}^{-}_{a} F$  which is defined as follows. If, for a given
$0\le r \le \infty\,,$ the 'circle' $\gamma_r(a)$  does not meet the
set $F\,,$ then it has empty intersection with the set ${\rm Cr}^{-}_{a} F$ as well. If  $\gamma_r(a) \subset  F\,,$ then $\gamma_r(a) \subset  {\rm Cr}^{-}_{a} F\,.$ In the remaining cases, the set ${\rm Cr}^{-}_{a} F$ intersects  $\gamma_r(a)$  along a closed arc with center at the ray $[-\infty, a]\,,$ whose linear measure agrees with the measure of the
intersection of $F$ with $\gamma_r(a)\,.$ It is readily verified that the set ${\rm Cr}^{-}_{a} F$  is closed. In the same way we define the
the result of the circular symmetrization ${\rm Cr}^{+}_{a} F$ with respect to the
ray $[a,\infty].$ The only difference consists of the fact that the center of the closed arc is now contained on the ray  $[a,\infty].$
In the same way, we define the circular symmetrizations  ${\rm Cr}^{+}_{a} \,,\,{\rm Cr}^{-}_{a}$ of open sets, now taking an open arc in place of a closed arc.

%\begin{figure}[h]
 % \includegraphics[scale=0.26]{fig12n.png}
 %\includegraphics[scale=1]{figure1_07.eps}
%\caption{Left: The continuum $E$ divides the unit disk $U$ into %three domains.
%Right: The image of the domain $D_k$ under $\varphi_k\,.$}
%\end{figure}

%\begin{figure}[h]
%  \includegraphics[scale=0.26]{fig3new.png}
% \includegraphics[scale=1]{figure2_05.eps}
%\caption{Extremal configuration for $n=3\,.$}
%\end{figure}

\begin{theorem} \label{3.1} The following inequalities hold
\begin{equation} \label{3.3}
  \rc E \ge \rc  [  ({\rm Cr}^{-}_{o} \overline{E})\cap U]
\end{equation}
\begin{equation} \label{3.4}
  \rc E \ge \rc  [  ({\rm Cr}^{-}_{a} (E \cup \Sigma))\setminus \Sigma]\,,
  \quad (a \le 0)\,,
\end{equation}
\begin{equation} \label{3.5}
  \rc E \ge \rc  [  ({\rm Cr}^{+}_{a} (E \cup \Sigma))\setminus \Sigma]\,,
  \quad (a \ge 1)\,,
\end{equation}
where $\Sigma= \{ z: |z|\ge 1  \}\,.$
\end{theorem}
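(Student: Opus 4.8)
The plan is to reduce each inequality in Theorem \ref{3.1} to a known symmetrization inequality for the inner radius, applied along the imaginary-axis-analogue (here the real diameter through $z_0=1$) and then to invoke the asymptotic expansion \eqref{1.2} to transfer the inner-radius comparison into a comparison of relative capacities. This is exactly the mechanism used in Properties \ref{2.10} and \ref{2.11}: one first sets up a comparison $r(D\setminus E, z)\le r(D\setminus \widetilde E, z)$ valid for $z$ on the relevant approach path, and then the expansion $r(D\setminus E,z)/r(D,z)=1-2(\rc E)|z-z_0|^2+o(|z-z_0|^2)$ forces $\rc E\ge \rc \widetilde E$. Since here $D=U$ and $z_0=1$, the approach path is along the real axis perpendicular to $\partial U$ at $1$, so I only need the inner-radius inequalities to hold at real points $z\in(-1,1)$ near $1$.

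First I would handle \eqref{3.3}. The target set is $({\rm Cr}^{-}_{0}\overline E)\cap U$, the circular symmetrization of the closure of $E$ with respect to the ray $[-\infty,0]$, intersected back with $U$. The natural move is to symmetrize the complementary set: apply circular symmetrization with respect to the opposite ray $[0,\infty]$ (which contains the point $z_0=1$) to the closed complement $\overline{U\setminus E}$, or equivalently to $\overline{\mathbb C}\setminus(U\setminus E)$. By the standard inner-radius inequality under circular symmetrization (see \cite{ps,h,d0}, and as used via \cite{d0} in Property \ref{2.10}), the inner radius at a point of the ray of symmetrization does not decrease under symmetrizing the complement toward that ray; here $z_0=1\in[0,\infty]$ and the approach point $z\in(0,1)$ lies on this ray, so $r(U\setminus E,z)\le r\big(U\setminus(({\rm Cr}^{-}_{0}\overline E)\cap U),z\big)$ for real $z$ near $1$. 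Applying \eqref{1.2} then yields \eqref{3.3}.

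For \eqref{3.4} and \eqref{3.5} the auxiliary set $\Sigma=\{|z|\ge1\}$ is adjoined precisely so that the symmetrization is performed on a closed set containing all of $\overline{\mathbb C}\setminus U$, after which $\Sigma$ is removed to return to a subset of $U$. I would form $E\cup\Sigma$, symmetrize it with respect to $[-\infty,a]$ for $a\le0$ (resp.\ $[a,\infty]$ for $a\ge1$), and note that the center $a$ and the point $z_0=1$ lie on appropriate rays so that the symmetrization of the complement increases the inner radius at real $z$ near $1$; the condition $a\le0$ in \eqref{3.4} and $a\ge1$ in \eqref{3.5} is exactly what guarantees that the approach path to $z_0=1$ sits on the correct side of the center. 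Removing $\Sigma$ afterwards restores a relatively closed subset of $U$, and again \eqref{1.2} converts the inner-radius inequality into the capacity inequality.

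The main obstacle, and the only step requiring genuine care, is verifying that in each case the point $z_0=1$ together with the perpendicular approach path lies on the ray with respect to which the symmetrization is taken, so that the classical symmetrization inequality for the inner radius applies at those points; this is where the sign restrictions $a\le0$ and $a\ge1$ must be checked rather than merely quoted. I also need to confirm that adjoining and later deleting $\Sigma$ does not alter the inner distance condition $\rho(\,\cdot\,,1)>0$ and that the resulting set is still relatively closed in $U$, so that $\rc$ of the symmetrized set is well defined via Theorem \ref{2.1}. Once the correct ray/path alignment is established, each inequality follows by the same two-line argument as in Properties \ref{2.10} and \ref{2.11}: quote the inner-radius symmetrization inequality, then apply \eqref{1.2}.
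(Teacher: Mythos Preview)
Your proposal is correct and follows essentially the same approach as the paper's proof: for each inequality one applies P\'olya's circular symmetrization inequality to the open set $U\setminus E$ (symmetrizing toward the ray through the approach point $x\in(0,1)$, i.e.\ ${\rm Cr}^+_0$, ${\rm Cr}^+_a$ with $a\le0$, and ${\rm Cr}^-_a$ with $a\ge1$), identifies the result as $U$ minus the symmetrized set appearing in \eqref{3.3}--\eqref{3.5}, and then invokes \eqref{1.2}. Your emphasis on verifying that the approach path lies on the correct ray (forcing the sign restrictions on $a$) and that the symmetrized set remains admissible for $\rc$ is exactly the content the paper leaves implicit.
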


\begin{proof} All three inequalities can be proved in a unified
way, applying formula \eqref{1.2} and Polya's result on the behavior of the
inner radius of a domain under circular symmetrization (see \cite{h,d0}).
For instance, from Polya's theorem it follows that for every $x,\,0<x<1\,,$
$$
r(U\setminus E, x) \le r({\rm Cr}^+_{o}(U \setminus E), x)
=  r( U \setminus [{\rm Cr}^-_{o}( \overline{E}) \cap U], x)\,.
$$
It remains to apply the formula \eqref{1.2} with $D=U\,.$ Analogously,
for $a\le 0$
$$
r(U\setminus E, x) \le r({\rm Cr}^+_{a}(U \setminus E), x)
=  r( U \setminus [{\rm Cr}^-_{a}( {E} \cup \Sigma) \setminus \Sigma], x)\,.
$$
and for $a \ge 1$ we have
$$
r(U\setminus E, x) \le r({\rm Cr}^-_{a}(U \setminus E), x)
=  r( U \setminus [{\rm Cr}^+_{a}( {E} \cup \Sigma) \setminus \Sigma], x)\,.
$$
The theorem is proved.
\end{proof}

Figure 1 provides an illustration of a set $E$ and Figures 2,3,4 illustrate
its deformation under each of the transformations in the inequalities
\eqref{3.3}, \eqref{3.4}, \eqref{3.5}, respectively.
\begin{center}
\begin{figure}[h]
\includegraphics{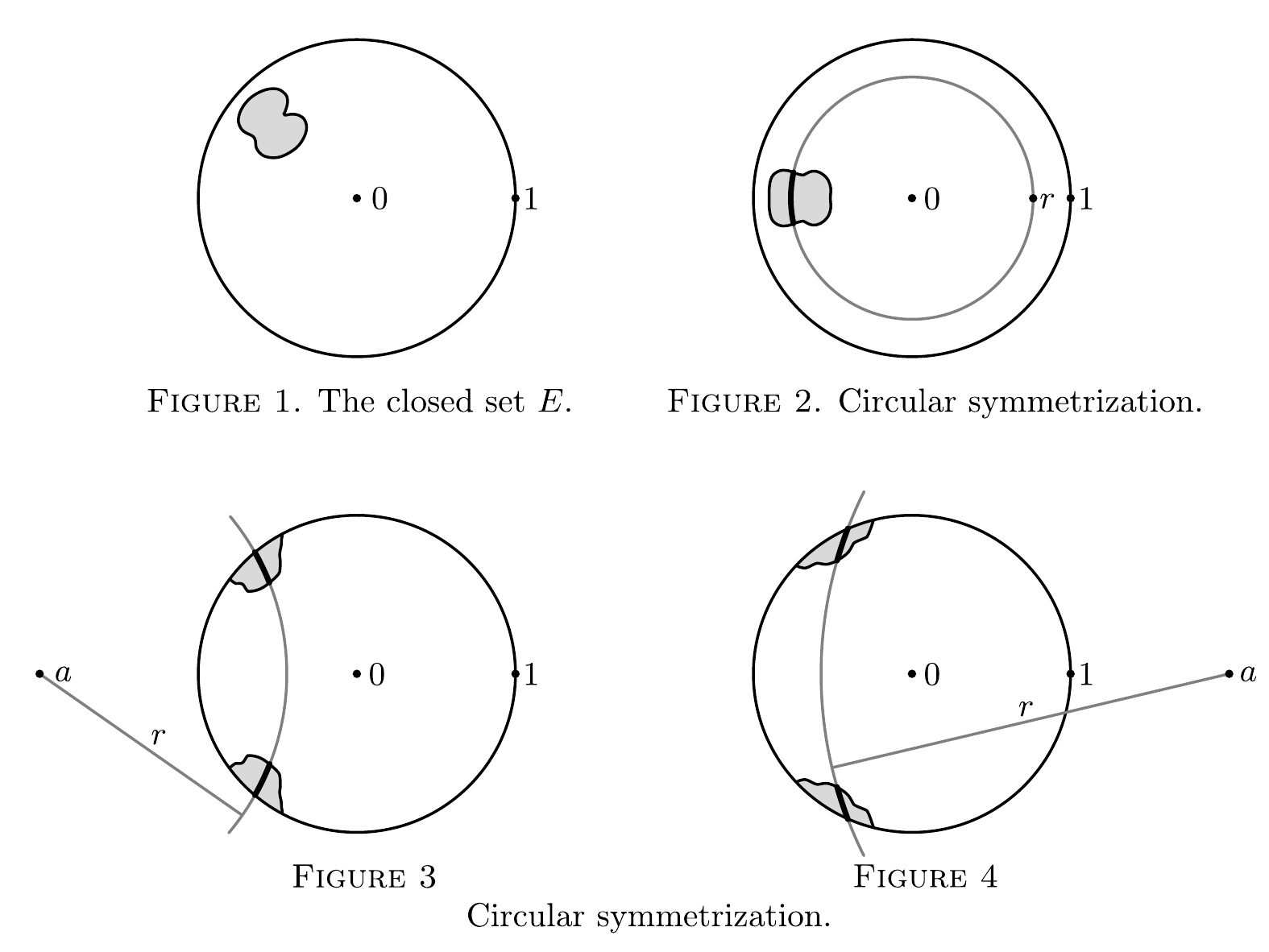}
\end{figure}
\end{center}
The {\it Steiner symmetrization } of an open set $B$ with respect to
the real axis is defined by
$$
{\rm St}\, B = \{ x+iy: B \cap \lambda(x) \neq \emptyset, \,
2 |y|< \mu(B \cap \lambda(x) ) \}\,,
$$
where $\lambda(x)$ is the line ${\rm Re}\, z= x,$ and $\mu(\cdot)$
is the linear Lebesgue measure.

\begin{theorem} \label{3.8}
The following inequality holds
\begin{equation} \label{3.9}
\rc E \ge \rc [  U \setminus {\rm St}\,(U \setminus E)] \,.
\end{equation}
\end{theorem}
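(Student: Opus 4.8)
The plan is to follow exactly the same scheme as in the proof of Theorem \ref{3.1}, with circular symmetrization replaced by Steiner symmetrization and with Polya's theorem on the inner radius invoked in its Steiner form. The decisive geometric fact is that the point $z_0=1$ lies on the real axis, which is the axis of the symmetrization ${\rm St}$, and that a path approaching $1$ perpendicularly to $\partial U$ runs along the real axis. Thus the monotonicity of the inner radius at axis points, together with the asymptotic expansion \eqref{1.2}, should convert an inner-radius inequality directly into \eqref{3.9}.

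First I would record three elementary facts about the set $S:={\rm St}\,(U\setminus E)$. Since $U$ is symmetric with respect to the real axis one has ${\rm St}\,U=U$, and by monotonicity of Steiner symmetrization under inclusion it follows that $S\subset U$; moreover $S$ is open, because the Steiner symmetrization of an open set is open. Consequently $U\setminus S$ is relatively closed in $U$, and from $S\subset U$ one obtains the set identity $U\setminus(U\setminus S)=S$. Finally, the hypothesis $\rho(E,1)>0$ means that the vertical slices $(U\setminus E)\cap\lambda(x)$ coincide with the full slices $U\cap\lambda(x)$ for $x$ near $1$, so that $S$ contains a neighbourhood of $1$ in $U$; hence $\rho(U\setminus S,1)>0$ and the relative capacity on the right of \eqref{3.9} is well defined.

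Next I would apply Polya's theorem on the behaviour of the inner radius under Steiner symmetrization (see \cite{h,d0}): for every $x$ with $0<x<1$,
$$
r(U\setminus E,x)\le r({\rm St}\,(U\setminus E),x)=r(S,x)=r\bigl(U\setminus(U\setminus S),x\bigr)\,.
$$
Dividing through by $r(U,x)$ and inserting the expansion \eqref{1.2} at $z_0=1$ along the real axis, once for the set $E$ and once for the set $U\setminus S$, the constant terms $1$ cancel; dividing by $-2|x-1|^2$ and letting $x\to1$ reverses the inequality and yields $\rc E\ge\rc(U\setminus S)$, which is precisely \eqref{3.9}.

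The step needing the most care is the invocation of Polya's theorem, since $U\setminus E$ need not be connected, whereas the classical Pólya--Szegő statement concerns a single domain. One must therefore use the version that compares the inner radius of the component of $U\setminus E$ containing $x$ with that of the component of $S$ containing $x$, and check that this quantity does not decrease under Steiner symmetrization for $x$ on the axis. This is exactly the generalized form in which the symmetrization result for the inner radius of open sets is stated in \cite{d0}, so no new difficulty arises and the remaining steps are the routine manipulations already carried out in Theorem \ref{3.1}.
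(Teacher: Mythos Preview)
Your proposal is correct and follows essentially the same approach as the paper: apply the P\'olya--Szeg\H{o} inequality for the inner radius under Steiner symmetrization at real points $x\in(0,1)$, use the identity $U\setminus[U\setminus{\rm St}(U\setminus E)]={\rm St}(U\setminus E)$, and conclude via \eqref{1.2}. The additional checks you include---that ${\rm St}(U\setminus E)\subset U$, that $\rho(U\setminus S,1)>0$, and the remark on components---are useful well-definedness details that the paper leaves implicit.
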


\begin{proof}
In the same way as in the proof of Theorem \ref{3.1}, it is enough
to apply a theorem of Polya and Szeg\"o \cite{ps}
\begin{equation}
r(U \setminus E, x) \le r({\rm St}\,(U \setminus E),x)
\end{equation}
and to note the equality
$$   U \setminus [U \setminus {\rm St}\,(U \setminus E)] =
{\rm St}\,(U \setminus E)  \,.$$
The proof is complete.
\end{proof}

\begin{center}
\begin{figure}[h]
\includegraphics{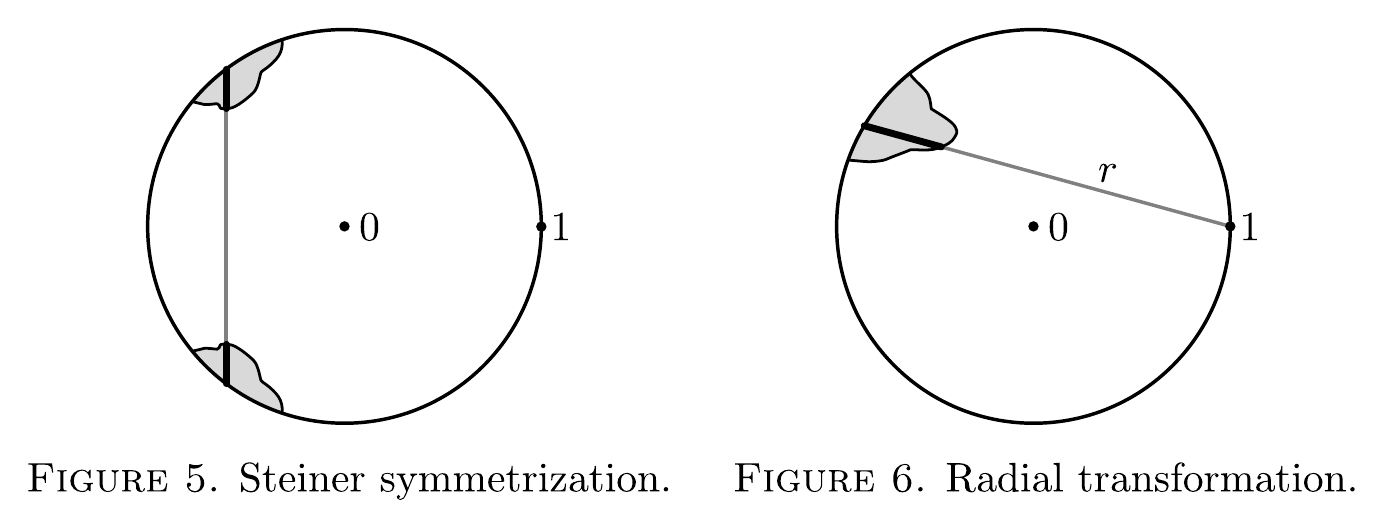}
\end{figure}
\end{center}

We consider the following radial Marcus transformation (see \cite{m1}),
which transforms an open set $B, z_0 \in B$ onto the starlike set
$$
{\rm M}_{z_o}B = \{   z_0  + r e^{i \theta} : 0 \le r <  M(\theta,B)\,,
0 \le \theta \le 2 \pi \}\,,
$$
where
$$
M(\theta,B)=
\rho \, \exp \left(  \int_{F(\rho, \theta, B)} \frac{dr}{r} \right)\, \,,\quad
F(\rho, \theta, B)= \{r : z_0  + r e^{i \theta} \in B\,,
 \rho \le r < \infty  \} \,.
$$
Here $M(\theta,B)$ is independent of $\rho\,$ and
$\{   z:  |z-z_0|\le \rho\} \subset B\,.$
For open sets $B\,,$ lying in the disk $U$ and containing a relative
neighborhood $\{  z:  |z-1|  < \varepsilon\} \cap U$ for some $\varepsilon >0\,$
we define ${\rm M}  B \equiv {\rm M}_1B\,$ as above, but with a constraint on
$\theta: \pi/2 < \theta< 3 \pi/2 \,.$

\begin{theorem} \label{3.10}
The following inequality holds
\begin{equation} \label{3.11}
\rc E \ge \rc [U \setminus {\rm M}(U \setminus E)]\,.
\end{equation}
\end{theorem}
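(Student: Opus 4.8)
The plan is to reproduce the scheme of Theorems \ref{3.1} and \ref{3.8}: transfer a monotonicity property of the inner radius under the transformation in question---here the radial Marcus transformation ${\rm M}={\rm M}_1$---through the asymptotic formula \eqref{1.2}. Set $B=U\setminus E$. Since $\rho(E,1)>0$, the set $B$ is open and contains a relative neighborhood $\{z:|z-1|<\varepsilon\}\cap U$ of $z_0=1$, so that ${\rm M}B$ is defined. The points at which I would compare inner radii are the real points $x\in(0,1)$; they lie on the ray $\theta=\pi$ issuing from $z_0=1$, which is precisely the ray along which the approach to $z_0$ in \eqref{1.2} (with $D=U$, $z_0=1$) is perpendicular to $\partial U$.

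Before invoking \eqref{1.2} I would dispose of the set identity behind the right-hand side of \eqref{3.11}. The Marcus transformation is monotone under inclusion: if $B_1\subset B_2$ then $F(\rho,\theta,B_1)\subset F(\rho,\theta,B_2)$, whence $M(\theta,B_1)\le M(\theta,B_2)$ for every admissible $\theta$. Moreover ${\rm M}\,U=U$, because every $w\in U$ satisfies ${\rm Re}\,w<1$ and so is seen from $z_0=1$ under an angle $\theta\in(\pi/2,3\pi/2)$, while along each such ray the cross-section of $U$ is already the connected interval $\{r:0<r<-2\cos\theta\}$, which the transformation leaves fixed. Consequently ${\rm M}(U\setminus E)\subset{\rm M}\,U=U$, giving
$$
U\setminus[\,U\setminus{\rm M}(U\setminus E)\,]={\rm M}(U\setminus E),
$$
and, since ${\rm M}(U\setminus E)$ inherits a (starlike) relative neighborhood of $z_0=1$, the set $U\setminus{\rm M}(U\setminus E)$ is relatively closed in $U$ with positive inner distance from $1$, so that \eqref{1.2} is applicable to it.

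The decisive ingredient is Marcus's theorem on the behavior of the inner radius under the radial transformation \cite{m1}, which in the present normalization furnishes
$$
r(U\setminus E,x)\le r({\rm M}(U\setminus E),x),\qquad 0<x<1.
$$
Combining this with the displayed identity yields $r(U\setminus E,x)\le r(U\setminus[\,U\setminus{\rm M}(U\setminus E)\,],x)$. Dividing by $r(U,x)>0$ and substituting the two expansions supplied by \eqref{1.2} (approach along $(0,1)$), the constant terms are both equal to $1$, and the comparison of the coefficients of $(1-x)^2$ forces $\rc E\ge\rc[\,U\setminus{\rm M}(U\setminus E)\,]$, which is exactly \eqref{3.11}.

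The step I expect to cost the most effort is pinning down Marcus's inner-radius inequality in precisely the form above and verifying that it holds at the interior points $x\in(0,1)$ on the symmetry ray, rather than only at the center $z_0=1$, which is here a boundary point of $U$; one must also confirm that the angular restriction $\pi/2<\theta<3\pi/2$ is compatible with the hypotheses of Marcus's result. Once this monotonicity is secured, the passage through \eqref{1.2} is routine and formally identical to the arguments used for Theorems \ref{3.1} and \ref{3.8}.
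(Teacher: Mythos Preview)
Your concern at the end is exactly the gap. Marcus's theorem from \cite{m1} asserts $r(B,z_0)\le r({\rm M}_{z_0}B,z_0)$: the inner radius is compared \emph{at the center of the radial transformation}. It does \emph{not} give $r(B,x)\le r({\rm M}_1 B,x)$ for interior points $x\ne 1$, and since $z_0=1$ lies on $\partial U$ you cannot simply take $x=1$. So the displayed inequality $r(U\setminus E,x)\le r({\rm M}(U\setminus E),x)$ that you rely on is not supplied by Marcus, and the argument as written does not close.

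The paper's remedy is a two-step bridge. First, apply Marcus at the point $x$ itself, obtaining $r(U\setminus E_n,x)\le r({\rm M}_x(U\setminus E_n),x)$, where ${\rm M}_x$ is the radial transformation \emph{centered at $x$}. Second, compare ${\rm M}_x$ with ${\rm M}={\rm M}_1$ by showing the inclusion ${\rm M}_x(U\setminus E_n)\subset{\rm M}(U\setminus E)$ for $x$ sufficiently close to $1$. This inclusion is not automatic for the original $E$; it needs a little slack, which the paper manufactures by first approximating $E$ from outside by sets $E_n=\overline{B}_n\cap U$ with $B_n$ \emph{open} (the approximation property recorded just before Property~\ref{2.12}). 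The openness of $B_n$ is what guarantees the needed containment once $x$ is close to $1$. One then passes to the limit in $n$ using $\lim_{n\to\infty}\rc E_n=\rc E$. Everything after that---the identity $U\setminus[U\setminus{\rm M}(U\setminus E)]={\rm M}(U\setminus E)$ and the passage through \eqref{1.2}---proceeds exactly as you wrote.
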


\begin{proof}
Let  $  \{ B_n\}_{n=1}^{\infty}$ be a sequence from Property
\ref{2.11} for the given set $E, D=U\,,$ and let
$E_n= \overline{B}_n \cap U, n=1,2,\dots .$
According to a theorem of Marcus \cite{m1} for a fixed integer $n$ and for values of $x$ sufficiently close to $1, 0<x<1\,,$ we have
$$
r(U\setminus E_n,x) \le r({\rm M}_x(U\setminus E_n),x) =  r(U\setminus
[U\setminus {\rm M}_x(U\setminus E_n)],x)\,.
$$
On the other hand, because of the openness of $B_n,$ one can take $x$
so close  to $1$ that
$$  {\rm M}_x(U\setminus E_n)  \subset {\rm M}(U\setminus E) \,. $$
Therefore
$$
r(U\setminus [U\setminus {\rm M}_x(U\setminus E_n)],x)
\le r(U\setminus
[U\setminus {\rm M}(U\setminus E)],x) \,.
$$
It remains to apply formula \eqref{1.2} and Property \ref{2.11}. The proof is complete.
\end{proof}

\medskip
Let $A= \{ \alpha_k \}_{k=1}^{n}$ be a family of positive numbers
with $\sum_{k=1}^n  \, \alpha_k =1\,. $ We consider an averaging
transformation (see \cite{m2})\,, which assigns the starlike set

$$
{\mathbb R}^{{z_0}}_A \{ B_k \}_{k=1}^n =
\left\{  z_0 + r \,e^{i \theta} : 0 \le r < \Pi_{k=1}^n (M(\theta, B_k))^{\alpha_k} \,, 0 \le \theta \le 2 \pi \right\}
$$

\noindent
to a family of open subsets $B_k, k=1,...,n\,,$ containing a point
$z_0;$ here $M(\theta, B)$ is given above.
By Theorem 2.2 of \cite{m2}
$$
\Pi_{k=1}^n (r( B_k, z_0))^{\alpha_k} \le  r({\mathbb R}^{{z_0}}_A \{ B_k \}_{k=1}^n , z_0) \,.
$$

For a family of open subsets of the disk containing the neighborhood
$\{   z: |z-1| < \varepsilon\} \cap D$ for some $\varepsilon >0$, the set
$$
{\mathbb R}^{}_A \{ B_k \}_{k=1}^n \equiv {{\mathbb R}^{1}}_A \{ B_k \}_{k=1}^n
$$
is defined but under a constraint on $\theta: \, \pi/2<\theta < 3\pi/2\,.$
Repeating the proof of preceding theorem replacing ${\rm M}_x$ with
${\mathbb R}^x_A$ and applying Marcus' theorem and Property
\ref{2.11}, we arrive at the following result.

\begin{theorem} \label{3.12}
For every collection of sets $ \{ E_k \}_{k=1}^n $
and for every family  $A= \{ \alpha_k \}_{k=1}^{n}$ of positive numbers
with  $\sum_{k=1}^n  \, \alpha_k =1\,,  $ the following inequality hold
$$   \sum_{k=1}^n  \, \alpha_k \, \rc \, E_k \ge \rc \, [U \setminus
{\mathbb R}_A \{ U \setminus E_k \}_{k=1}^n  ] \,.
 $$
\end{theorem}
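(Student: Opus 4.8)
The plan is to repeat the proof of Theorem \ref{3.10} almost verbatim, with the single radial Marcus map $\mathrm{M}_x$ replaced by the averaging transformation $\mathbb{R}^x_A$, and with one approximating sequence attached to each of the $n$ sets rather than to a single set. First, for each $k \in \{1,\dots,n\}$ I would apply Property \ref{2.11} to $E_k$ (with $D = U$) to obtain open sets $\{B_{k,m}\}_{m=1}^{\infty}$ satisfying $E_k \subset B_{k,m}$ and $\lim_{m\to\infty}\rc(\overline{B}_{k,m}\cap U) = \rc E_k$, and I would abbreviate $E_{k,m} = \overline{B}_{k,m}\cap U$. Since $z_0 = 1$ lies on the unit circle, the real segment $x\to 1^-$ is perpendicular to the boundary arc there, so the expansion \eqref{1.2} is available along this path with $D = U$ and $z_0 = 1$.

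Next, fixing $m$ and taking $x \in (0,1)$ close to $1$, I would apply the product inequality of Marcus (Theorem 2.2 of \cite{m2}) with center $z_0 = x$ to the open sets $U\setminus E_{k,m}$, giving
$$\Pi_{k=1}^n \left(r(U\setminus E_{k,m}, x)\right)^{\alpha_k} \le r\!\left(\mathbb{R}^x_A\{U\setminus E_{k,m}\}_{k=1}^n, x\right).$$
Exactly as in Theorem \ref{3.10}, the openness of the $B_{k,m}$ permits choosing $x$ so near $1$ that $\mathbb{R}^x_A\{U\setminus E_{k,m}\}_{k=1}^n \subset \mathbb{R}_A\{U\setminus E_k\}_{k=1}^n$; monotonicity of the inner radius then replaces the right-hand side by $r(U\setminus[U\setminus\mathbb{R}_A\{U\setminus E_k\}_{k=1}^n], x)$.

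The analytic core is the insertion of \eqref{1.2} into this inequality. Writing each factor as $r(U\setminus E_{k,m}, x) = r(U, x)\,[1 - 2(\rc E_{k,m})|x-1|^2 + o(|x-1|^2)]$ and using $\sum_k\alpha_k = 1$ to collapse $\Pi_k (r(U,x))^{\alpha_k} = r(U,x)$, the left-hand product expands to first order as $r(U,x)\,[1 - 2(\sum_k \alpha_k\,\rc E_{k,m})|x-1|^2 + o(|x-1|^2)]$; the appearance of the weighted sum $\sum_k\alpha_k\,\rc E_{k,m}$ is precisely the mechanism turning the multiplicative estimate into an additive one. Applying \eqref{1.2} on the right as well, dividing by $r(U,x)$, and comparing the coefficients of $|x-1|^2$ as $x\to 1$ yields $\sum_k\alpha_k\,\rc E_{k,m} \ge \rc[U\setminus\mathbb{R}_A\{U\setminus E_k\}_{k=1}^n]$. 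Letting $m\to\infty$ and invoking Property \ref{2.11} in each slot replaces $\rc E_{k,m}$ by $\rc E_k$ and finishes the argument.

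The step I expect to demand the most care is the containment $\mathbb{R}^x_A\{U\setminus E_{k,m}\}_{k=1}^n \subset \mathbb{R}_A\{U\setminus E_k\}_{k=1}^n$. The inclusions $U\setminus E_{k,m}\subset U\setminus E_k$ give monotonicity of the radial extents $M(\theta,\cdot)$ for free, so the real issue is that the left-hand transformation is centered at $x$ while the right-hand one is centered at $1$; one must check that this shift of center, for $x$ near $1$, does not cause the averaged extent $\Pi_k(M(\theta,\cdot))^{\alpha_k}$ to overshoot in any admissible direction $\theta\in(\pi/2,3\pi/2)$. This is exactly where the openness of the $B_{k,m}$ and the relative-neighborhood hypothesis built into the definition of $\mathbb{R}_A$ enter, and it is the only place where the multi-set character of the statement requires more than a transcription of the single-set argument of Theorem \ref{3.10}.
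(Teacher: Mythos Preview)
Your proposal is correct and matches the paper's approach exactly: the paper itself proves Theorem \ref{3.12} by the one-line remark ``Repeating the proof of preceding theorem replacing ${\rm M}_x$ with ${\mathbb R}^x_A$ and applying Marcus' theorem and Property \ref{2.11},'' and you have faithfully unpacked what this entails, including the use of one approximating sequence per set $E_k$, the Marcus product inequality, the containment step, and the passage to coefficients via \eqref{1.2}. Your observation that $\sum_k\alpha_k=1$ is what collapses $\prod_k(r(U,x))^{\alpha_k}$ to $r(U,x)$ and converts the multiplicative estimate into the additive one is exactly the point, and your flagging of the center-shift in the containment step as the place needing care is appropriate but introduces no new difficulty beyond the single-set case.
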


Applying Theorem \ref{3.12} to the case when the collection of sets consists of two sets $\{  E,  \{ z : \overline{z}  \in E\} \}$ and the family of numbers $A= \{  \frac{1}{2}, \frac{1}{2}\}\,,$ we obtain the following theorem.

\begin{theorem} \label{3.13}
The following inequality holds
$$
\rc E \ge \rc {\rm R} E \, ,
$$
$$
{\rm R} E = \left\{  1+ r e^{i \theta} : (  { M} (\theta, U \setminus E)
 {M} (2 \pi -\theta,  U \setminus E))^{1/2} \le r < -2 \cos \theta,
\pi/2 < \theta< 3 \pi/2 \right\} \,.
$$
\end{theorem}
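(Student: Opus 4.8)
The plan is to invoke Theorem~\ref{3.12} for the two-element family consisting of $E$ itself and its reflection $E^* = \{z : \overline{z} \in E\}$ in the real axis, with equal weights $\alpha_1 = \alpha_2 = 1/2$. For this choice the conclusion of Theorem~\ref{3.12} reads
$$
\frac{1}{2}\, \rc E + \frac{1}{2}\, \rc E^* \ge \rc\, \bigl[\, U \setminus \mathbb{R}_A\{ U \setminus E,\, U \setminus E^* \} \,\bigr]\,,
$$
and the entire argument then consists of simplifying the left-hand side to $\rc E$ and identifying the set on the right-hand side with ${\rm R}E$.

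For the left-hand side I would argue that complex conjugation $z \mapsto \overline{z}$ maps $U$ onto itself, fixes the boundary point $z_0 = 1$, and carries a path approaching $1$ perpendicularly to the unit circle (a radial segment along the positive real axis) onto itself. Since the inner radius is invariant under such a reflection, the defining expansion \eqref{1.2} for $E^*$ is literally the same as the one for $E$; hence $\rc E^* = \rc E$ (note that $E^*$ inherits the hypotheses of being relatively closed with positive inner distance to $1$), and the left-hand side reduces to $\rc E$.

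For the right-hand side the main computation is the behavior of the Marcus radial function, based at the point $1$, under reflection. Since $1 + r e^{i\theta} \in U \setminus E^*$ if and only if $1 + r e^{-i\theta} \in U \setminus E$, one obtains $M(\theta, U \setminus E^*) = M(2\pi - \theta, U \setminus E)$, so the product defining $\mathbb{R}_A$ with weights $1/2,1/2$ becomes $\bigl( M(\theta, U \setminus E)\, M(2\pi - \theta, U \setminus E)\bigr)^{1/2}$, which is exactly the inner radial bound in the definition of ${\rm R}E$. It then remains to note that the ray issuing from $1$ at angle $\theta$ meets the unit circle at distance $-2\cos\theta$ (from $|1 + r e^{i\theta}| = 1 \iff r = -2\cos\theta$), this being positive precisely for $\pi/2 < \theta < 3\pi/2$; consequently the complement in $U$ of the starlike set $\mathbb{R}_A\{U \setminus E, U \setminus E^*\}$ is, ray by ray, the annular set $\{ 1 + r e^{i\theta} : (\,\cdots)^{1/2} \le r < -2\cos\theta \}$, i.e. precisely ${\rm R}E$. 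Combining the two identifications yields $\rc E \ge \rc {\rm R}E$.

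The substitution into Theorem~\ref{3.12} is immediate, so the only points demanding care --- and hence the main obstacle --- are the two identifications above: verifying that the angular restriction $\pi/2 < \theta < 3\pi/2$ is exactly what forces $U \setminus \mathbb{R}_A\{U \setminus E, U \setminus E^*\}$ to coincide with ${\rm R}E$, and checking that the reflection invariance of the inner radius legitimately transfers to the relative capacity in view of the perpendicular-approach requirement built into \eqref{1.2}.
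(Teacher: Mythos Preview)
Your proposal is correct and follows exactly the paper's approach: the paper deduces Theorem~\ref{3.13} simply by applying Theorem~\ref{3.12} to the pair $\{E,\{z:\overline{z}\in E\}\}$ with weights $\{1/2,1/2\}$, and you have supplied precisely the details (the equality $\rc E^*=\rc E$ via reflection and the identification of $U\setminus\mathbb{R}_A\{U\setminus E,\,U\setminus E^*\}$ with ${\rm R}E$) that the paper leaves implicit.
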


%%%%%%%%%%%%%%%%%%%%%%%%%%
%%%%%%%%%%%%%%%%%%%%%%%%%%
%%%%%%%%%%%%%%%%%%%%%%%%%%
\section{ Applications to bounded holomorphic functions in the disk }
%%%%%%%%%%%%%%%%%%%%%%%%%%
%%%%%%%%%%%%%%%%%%%%%%%%%%
%%%%%%%%%%%%%%%%%%%%%%%%%%
The important role of the Schwarzian derivative in geometric function
theory is well-known. In particular, it is interesting to find geometric estimates of the Schwarzian derivative. In this section are
given estimates of the Schwarzian derivative in terms of the geometry of the image domain of the unit disk under
a bounded holomorphic function. These estimates follow in a unified
way from the connection of the Schwarzian derivative and the relative
capacity at a boundary point. Note that distortion estimates involving the Schwarzian derivative originate, in particular, from generalized rigidity theorems.
For instance, Theorem 2.6 of the paper \cite{tv} can now be understood as
distortion result under the hypothesis that the image of the disk lies within a horocycle. In this context, there occurs also a natural connection with the initial coefficients of the expansion of the function in the neighborhood of a boundary point (see also \cite[  Theorem 4]{s}, \cite[ Proposition 2]{d2}). Here a more general situation is considered, when the image of the disk has a property, characteristic for the application of the symmetrization method \cite{h}. In accordance with what was said above,
we denote by $\mathcal{ B}$ the class of functions  $f$ holomorphic in the disk
$U_z= \{ z : |z|<1 \}\, , f(U_z)\subset U_w$ with the asymptotic expansion
$$
f(z)= 1+ a_1(z-1)+  a_2(z-1)^2 +  a_3(z-1)^3+  \angle o((z-1)^3)
$$
where $a_1>0,$ $ {\rm Re} (2 a_2+ a_1(1-a_1))=0$ and $ \angle o((z-1)^3)$ is an
infinitesimal quantity in comparison with $(z-1)^3$ as $z \to 1$ in any
Stolz angle in $U_z$ with vertex at $z=1\,.$

The condition for the real part of the combination of initial coefficients of the function in the definition of the class $\mathcal{ B}$ is a necessary normalization for obtaining estimates for the Schwarzian at a boundary point of the unit disk. The corresponding condition
for a coefficient of  the holomorphic functions of the upper half plane has the form ${\rm Im\,}\, c_2=0.$  The geometric meaning of this condition is given in \cite{d2}.

Everywhere in what follows
in this section $\rc \, E$ stands for the relative capacity of $E$ with respect to the disk $U$ at the point $z=1\,.$

\begin{theorem} \label{4.1}
For every function $f$ of the class $\mathcal{ B}$ and for every set $E\,,$ closed
with respect to the disk $U_w$ and lying in the complement
$U_w \setminus f(U_z)$
at a positive distance from the point $z=1\,,$ the following inequality  for the Schwarzian derivative holds
$$
- \frac{1}{6} {\rm Re}\, S_f(1):= - {\rm Re}\,  \left(  \frac{a_3}{a_1}-
 \frac{a_2^2}{a_1^2}   \right) \ge a_1^2 \rc \, E \,.
$$
The equality holds for functions $f$ of class $\mathcal{ B}\,,$ mapping conformally and univalently
 the disk $U_z$ onto the domain $U_w \setminus  E \,.$
\end{theorem}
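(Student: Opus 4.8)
The plan is to bound the conformal invariant that defines $\rc E$ from below by the hyperbolic distortion factor of $f$, and then to read off the inequality by comparing the second-order expansions of both sides at the boundary point $z=1$. The hypothesis $E\subset U_w\setminus f(U_z)$ means $f(U_z)\subset U_w\setminus E$. For $z$ near $1$ the point $f(z)$ lies in the relative neighbourhood of $1$ that is free of $E$ (here $\rho(E,1)>0$ is used), so $f(z)$ and $1$ lie in one fixed connected component $\Omega$ of $U_w\setminus E$, and $r(U_w\setminus E,f(z))=r(\Omega,f(z))$. First I would establish the subordination inequality
\[
 r(U_w\setminus E,\,f(z))\ \ge\ |f'(z)|\,r(U_z,z).
\]
To do this I compare $u(\zeta)=g_\Omega(f(\zeta),f(z))$ with $g_{U_z}(\zeta,z)$: at $\zeta=z$ the logarithmic poles cancel and the difference extends harmonically there with value $\log r(\Omega,f(z))-\log|f'(z)|-\log r(U_z,z)$, while at the other preimages of $f(z)$ the function $u$ has upward logarithmic poles, so the difference is superharmonic on $U_z$; its boundary values are $\ge 0$ because $g_{U_z}(\cdot,z)\to 0$ and $u\ge 0$. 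The minimum principle then forces the value at $z$ to be nonnegative, which is the displayed inequality. Dividing by $r(U_w,f(z))=1-|f(z)|^2$ and using $r(U_z,z)=1-|z|^2$ gives
\[
 \frac{r(U_w\setminus E,\,f(z))}{r(U_w,\,f(z))}\ \ge\ \phi(z):=\frac{|f'(z)|\,(1-|z|^2)}{1-|f(z)|^2}.
\]

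Next I would let $z=1-t$ with $t\to 0^+$, i.e. approach $1$ along the radius. Since $a_1>0$ is real, $f(1-t)-1=-a_1 t+o(t)$ tends to $1$ along the negative real axis, which is perpendicular to $\partial U_w$ at $1$; hence Theorem \ref{2.1} (in the weakened form of the Remark following it) applies to the left-hand side with $D=U_w$, $z_0=1$. Because $|f(1-t)-1|^2=a_1^2 t^2+o(t^2)$, this yields
\[
 \frac{r(U_w\setminus E,\,f(1-t))}{r(U_w,\,f(1-t))}=1-2\,(\rc E)\,a_1^2\,t^2+o(t^2),\qquad t\to 0^+.
\]

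The remaining and most laborious step is the expansion of $\phi(1-t)$. Substituting the series for $f$ and $f'$ into $\log\phi=\log|f'|+\log(1-|z|^2)-\log(1-|f|^2)$ and collecting powers of $t$, I expect the constant term to vanish and, crucially, the coefficient of $t$ to reduce to $a_1-1-2\,\mathrm{Re}(a_2)/a_1$, which is exactly $0$ under the normalization $\mathrm{Re}(2a_2+a_1(1-a_1))=0$; indeed this normalization is \emph{equivalent} to the vanishing of the linear term, which is why it is imposed. Once the linear term drops out, the quadratic coefficient should collapse (the normalization again cancelling several terms) to $\tfrac13\,\mathrm{Re}\,S_f(1)$, giving $\phi(1-t)=1+\tfrac13\,\mathrm{Re}\,S_f(1)\,t^2+o(t^2)$. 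Comparing the two expansions at order $t^2$ then yields $-2\,(\rc E)\,a_1^2\ge \tfrac13\,\mathrm{Re}\,S_f(1)$, that is $-\tfrac16\,\mathrm{Re}\,S_f(1)\ge a_1^2\,\rc E$, which is the assertion.

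Finally, for the equality case I would observe that when $f$ maps $U_z$ conformally and univalently onto $U_w\setminus E$, the component $\Omega$ equals $f(U_z)=U_w\setminus E$, and the conformal radius transforms exactly, $r(U_w\setminus E,f(z))=|f'(z)|\,r(U_z,z)$; thus the subordination inequality becomes an identity, the two expansions coincide, and equality holds. The main obstacle is twofold: proving the subordination inequality cleanly while keeping track of the correct connected component $\Omega$, and carrying out the second-order expansion of $\phi$ so that the normalization visibly removes the linear term and reduces the quadratic coefficient precisely to $\tfrac13\,\mathrm{Re}\,S_f(1)$.
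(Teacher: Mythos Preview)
Your proposal is correct and follows essentially the same route as the paper: bound $r(U_w\setminus E,f(x))/r(U_w,f(x))$ from below by the hyperbolic distortion $|f'(x)|(1-x^2)/(1-|f(x)|^2)$ along the radius $x\to 1^-$, expand both sides to second order, and compare coefficients. The only difference is that the paper obtains the subordination inequality by citing Hayman's Theorem~4.7 (together with the monotonicity $r(f(U_z),\cdot)\le r(U_w\setminus E,\cdot)$), while you give a direct Green-function argument for it; the paper then carries out the expansion of $\phi$ in full detail and arrives at exactly your claimed quadratic coefficient $\tfrac13\,\mathrm{Re}\,S_f(1)$, with the normalization indeed killing the linear term.
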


\begin{proof}
In view of the monotonicity of the inner radius and the formula \eqref{1.2} we have for points $x$ on the segment $(0,1)$
$$
\frac{r(f(U_z), f(x))}{  r(U_w, f(x))} \le \frac{r(U_w\setminus E, f(x))}
{  r(U_w, f(x))} = 1 - 2 (\rc  \, E) \, |f(x)-1|^2+ o(|f(x)-1|^2)=
$$
\begin{equation} \label{4.14}
1 - 2\, a_1^2\, (\rc  \, E)(1-x)^2 + o((x-1)^2)\,,\quad x \to 1 \,.
\end{equation}
In the case $f(U_z)=U_w \setminus E$, the equality holds  in \eqref{4.14}\,.
On the other hand, from the result of Hayman \cite[Theorem 4.7]{h} it follows that
\begin{equation} \label{4.15}
\frac{r(f(U_z), f(x))}{  r(U_w, f(x))} \ge
 \frac{|f'(x)|\,r(U_z, x)}{  r(U_w, f(x))} = \frac{|f'(x)|\,(1-x^2)}{1-|f(x)|^2}\, ,
\end{equation}
where, furthermore, the sign of equality holds for univalent functions.
We next find the asymptotic behavior of the right hand side of \eqref{4.15}
$$
 \frac{|f'(x)|\,(1-x^2)}{1-|f(x)|^2}= $$
\medskip
$$  \frac{|a_1-2 a_2(1-x)+ 3 a_3(1-x)^2
+o((1-x)^3)|  (1-x)(2-(1-x))}{ 1- (1-
 a_1(1-x)+ {\rm Re}\, a_2(1-x)^2 -{\rm Re}\, a_3(1-x)^3
+o((1-x)^3))^2 +  o((1-x)^3)}=
$$
\medskip
$$
\frac{|2 a_1+(1-x)(-4a_2-a_1) +(1-x)^2(6 a_3+2 a_2) +o((1-x)^2)|  }{
2a_1+(1-x)(-a_2^2-2 {\rm Re} \, a_2) +(1-x)^2(2 {\rm Re} \, a_3 + 2 a_1 \, {\rm Re} \, a_2)   +o((1-x)^2) }=
$$
\medskip
%$$
%\left| 1 +(1-x)  \left( -\frac{2 a_2}{a_1} - \frac{1}{2}+\frac{a_1}{2}+
%\frac{{\rm Re} \, a_2 }{a_1} \right)+
%(1-x)^2 \left[ \frac{3 a_3}{a_1}+ \frac{ a_2}{a_1} -\frac{{\rm Re} a_3}{a_2}-{\rm Re} a_2 - (\frac{2 a_2}{a_1}+\frac{ 1}{2})(\frac{ a_1}{2} +
%\frac{{\rm Re} a_2}{a_1})+ (\frac{ a_1}{2}+ \frac{ {\rm Re} a_2}{a_1})^2
%\right]
%+o((1-x)^2) \right|=
%$$
$$
\left| 1 +(1-x)  \left(-U+V \right)+(1-x)^2 \left[ \frac{3 a_3}{a_1}+ \frac{ a_2}{a_1} -\frac{{\rm Re} a_3}{a_1}-{\rm Re} a_2 - U \, V+ V^2 \right] +o((1-x)^2) \right|\equiv W\,,
$$

\medskip
\noindent
where $U=\frac{2 a_2}{a_1} + \frac{1}{2}\,,  V= \frac{a_1}{2}+
\frac{{\rm Re} \, a_2 }{a_1} \,.$  Next, writing
$$
Y\equiv\frac{3a_3}{a_1}+ \frac{a_2}{a_1}- \frac{{\rm Re} a_3}{a_1}- a_2- \frac{2 a_2 {\rm Re} a_2}{a_1^2}-\frac{a_1}{4}-\frac{{\rm Re} a_2}{2a_1}+\frac{a_1^2}{4}
+\frac{({\rm Re} a_2)^2 }{a_1^2}
$$
we have
$$
W= \left| 1+(1-x)\left[ \frac{-{\rm Re} a_2 +a_1^2-a_1}{2a_1} - i \frac{2 {\rm Im} a_2}{a_1} \right] +Y (1-x)^2 +o((1-x)^2)  \right|=
$$
$$
\left\{ [1+(1-x)^2 {\rm Re}Y +o((1-x)^2) ]^2 +(1-x)^2 \frac{4 ({\rm  Im} a_2)^2}{a_1^2} +o((1-x)^2)  \right\}^{1/2} =
$$
$$
1+(1-x)^2 \left(
 \frac{2{\rm Re} a_3}{a_1}-  \frac{2({\rm Re} a_2)^2}{ a_1^2}+ \frac{2 ({\rm Im} a_2)^2}{a_1^2}
 \right) +o((1-x)^2)
$$
$$
= 1-2 {\rm Re} \left(  - \frac{a_3}{a_1}+ \frac{a_2^2}{a_1^2} \right)(1-x)^2
+o((1-x)^2)\,, \quad x \to 1 \,.
$$
\medskip
Substituting this resulting asymptotic expansion in \eqref{4.15} and
adding \eqref{4.15} and \eqref{4.14} we arrive at the required inequality. The theorem is proved.
\end{proof}

% Rmk 4.4
\begin{remark} \label{4.16}
It can be proved that the condition
$$ {\rm Re} \left(2a_2+a_1(1-a_1) \right)=0$$
is essential for Theorem {\rm \ref{4.1}} {{\rm (}cf. \cite[p. 651]{d2}{\rm )}.}
\end{remark}

%Rmk 4.5
\begin{remark} \label{4.17} It is well-known that functions of class $\mathcal{ B}$ satisfy
${\rm Im} S_f(1)=0$ {{\rm (}see \cite{s,tv}{\rm )}.} Therefore in the inequality of
Theorem {\rm \ref{4.1}} and in the following applications, the real part of
the Schwarzian could be replaced with $S_f(1)\,.$
\end{remark}

Some applications of Theorem \ref{4.1} to the proof of
geometric properties of holomorphic functions are given, for instance, in the following statements (cf. \cite{s,tv,lln, d2}).

% Thm 4.6?
\begin{theorem} \label{4.19}
Let the function $f$ be of class $\mathcal{ B}\,,$ and let the angular Lebesgue
 measure of the intersection of the image of the unit disk $f(U_z)$
with every circle $|w|=r, 0<r<1\,,$
be smaller than or equal to $\alpha,
0 <\alpha< 2\pi\,.$
Then
$$
{\rm Re} \frac{ S_f(1)}{(f'(1))^2} \le - \frac{3 \pi^2}{2 \alpha^2}
\left[(\frac{\alpha}{\pi} -1)^2 +1\right]\,.
$$
The case of equality holds for the function
$$
f_{\alpha}(z)= \left( \frac{z-1+ \sqrt{2z^2 +2}}{z+1} \right)^{\alpha/\pi}
$$
which maps the unit disk $U_z$ conformally and univalently onto the sector
$B(\alpha)= \{  z: |z|<1\,,  |{\rm arg} z| <\alpha/2 \}\,.$
\end{theorem}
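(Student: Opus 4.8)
The plan is to feed the circular symmetrization estimates of Section~3 into the capacity bound of Theorem~\ref{4.1}, so that the whole assertion collapses to one explicit evaluation of the relative capacity of the complement of the sector $B(\alpha)$. Since $(f'(1))^2=a_1^2>0$ is a positive real, Theorem~\ref{4.1} applied to any closed $E'\subset U_w\setminus f(U_z)$ with $\rho(E',1)>0$ already gives
$$
{\rm Re}\,\frac{S_f(1)}{(f'(1))^2}=\frac{{\rm Re}\,S_f(1)}{a_1^2}\le -6\,\rc E',
$$
so it suffices to bound $\rc E'$ from below for a well-chosen $E'$. I would take $E=U_w\setminus f(U_z)$ and, to honor the requirement $\rho(E',1)>0$, work with the truncations $E_\varepsilon=E\cap\{|w|\le 1-\varepsilon\}$ (which satisfy $\rho(E_\varepsilon,1)\ge\varepsilon$), deferring the limit $\varepsilon\to0$ to the very end.

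The geometric core is a symmetrization estimate. By hypothesis every circle $|w|=r$, $r<1$, meets $f(U_z)$ in angular measure $\le\alpha$, hence meets $E$ — and for $r\le 1-\varepsilon$ also $E_\varepsilon$ — in measure $\ge 2\pi-\alpha$. The circular symmetrization ${\rm Cr}^{-}_{o}$ about the ray $[-\infty,0]$ points away from the distinguished point $z_0=1$, and inequality~\eqref{3.3} of Theorem~\ref{3.1} gives $\rc E_\varepsilon\ge \rc[({\rm Cr}^{-}_{o}\overline{E_\varepsilon})\cap U]$. Symmetrization preserves the angular measure on each circle and centers the resulting arc at argument $\pi$; since $E_\alpha:=U_w\setminus B(\alpha)=\{w:|w|<1,\ \alpha/2\le|\arg w|\le\pi\}$ meets every circle in the symmetric arc about argument $\pi$ of measure exactly $2\pi-\alpha$, the symmetrized set contains $E_\alpha\cap\{|w|\le 1-\varepsilon\}$. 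Monotonicity (Property~\ref{2.8}) followed by $\varepsilon\to0$ then yields
$$
{\rm Re}\,\frac{S_f(1)}{(f'(1))^2}\le -6\,\rc E_\alpha .
$$

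It remains to evaluate $\rc E_\alpha$, and here I would use the extremal map. Since $f_\alpha$ maps $U_z$ conformally onto $B(\alpha)=U_w\setminus E_\alpha$ and $\rho(E_\alpha,1)>0$, the equality case of Theorem~\ref{4.1} gives precisely $-6\,\rc E_\alpha={\rm Re}\,S_{f_\alpha}(1)/(f_\alpha'(1))^2$. To compute the latter I would factor $f_\alpha=p\circ\phi$, where $p(\zeta)=\zeta^{\alpha/\pi}$ and $\phi(z)=(z-1+\sqrt{2z^2+2})/(z+1)$ is the $\alpha$-independent conformal map of $U_z$ onto the right half-disk, and then use the composition law $S_{f_\alpha}=(S_p\circ\phi)(\phi')^2+S_\phi$ together with the expansion of $\phi$ at $z=1$ (where $\phi(1)=1$, $\phi'(1)=\tfrac12$). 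En route one checks $a_1>0$ and ${\rm Re}(2a_2+a_1(1-a_1))=0$, so that indeed $f_\alpha\in\mathcal B$, and that $S_{f_\alpha}(1)$ is real in accordance with Remark~\ref{4.17}; the direct calculation then produces the constant displayed on the right-hand side of the asserted inequality, with equality attained by $f_\alpha$.

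I expect the symmetrization step to be the main obstacle. One must verify that \eqref{3.3} runs in the needed direction (symmetrization toward the ray opposite to $z_0$ lowers the relative capacity), that the symmetrized complement genuinely contains the truncated sector-complement $E_\alpha$, and that both limiting passages — the outer limit $\varepsilon\to0$ and the convergence of the capacities of the truncations to $\rc E_\alpha$ — are legitimate, the latter being underwritten by the monotone behavior of $\rc$ and the Green-function representation recorded after Theorem~\ref{2.1}. By contrast, the closing Schwarzian computation, though lengthy, is entirely routine.
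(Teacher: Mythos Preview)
Your approach is essentially identical to the paper's: both feed the circular symmetrization estimate \eqref{3.3} into Theorem~\ref{4.1} and then evaluate the extremal capacity $\rc(U_w\setminus B(\alpha))$ via the equality case for $f_\alpha$. The only difference is a technical one in the truncation step --- the paper removes a small disk $\{|z-1|<\varepsilon\}$ and compares the symmetrized set with $U_w\setminus B(\beta)$ for some $\beta>\alpha$, taking $\beta\to\alpha$ at the very end in the explicit Schwarzian formula; this sidesteps the monotone-convergence-of-capacities argument that your radial truncation $E\cap\{|w|\le 1-\varepsilon\}$ would otherwise require.
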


\begin{proof}
By Theorems \ref{4.1} and \ref{3.1} (inequality \eqref{3.3}) we have

\begin{equation} \label{4.20}
-\frac{1}{6}{\rm Re} \frac{S_f(1)}{(f'(1))^2} \ge \rc \,E \ge \rc [  ({\rm Cr}^-_{o} \overline{E}) \cap U_w]
\end{equation}
for every set $E\,$ relatively closed  with respect to $U_w\,,$ lying in $U_w \setminus f(U_z)$ and having a positive distance from the point $z=1\,.$ Set
$E=U_w \setminus f(U_z)\,.$ If the distance from $ U_w \setminus f(U_z)$ to the point $z=1$ is zero, then we set
$$E=(U_w \setminus f(U_z))\setminus \{ z: |z-1|< \varepsilon  \}\,,$$
for a sufficiently small $\varepsilon >0\,.$ If $\beta$ is a number with
$0< \alpha < \beta < 2\pi\,,$ then by the hypothesis of the theorem,
we can choose $\varepsilon$ such that the set
$ {\rm Cr}^-_{o} \overline{E}$
{ contains $U_w \setminus B(\beta)\,.$ }
From the monotonicity of the capacity and again by Theorem \ref{4.1} we obtain
$$
\rc\,[ ({\rm Cr}^-_{o} \overline{E}) \cap U_w ] \ge \rc [U_w \setminus B(\beta)]= -\frac{1}{6}{\rm Re} \frac{S_{f_{\beta}}(1)}{(f'_{\beta}(1))^2} \,.
$$
The proof now follows by comparing the two obtained inequalities, computing
the derivative of the function $f_{\beta}$ and letting $\beta\to \alpha \,.$
\end{proof}

% Thm 4.8
\begin{theorem} \label{4.25}
If a univalent function $f$ of the class $\mathcal{ B}$ does not take in the disk $U_z$
a value $w_0 \in U_w\,,$ then it satisfies the inequality \eqref{4.26},
\begin{equation} \label{4.26}
{\rm Re} \frac{ S_f(1)}{(f'(1))^2} \le -\frac{3}{4}
\left(\frac{1-\rho}{1+\rho} \right)^2  \,,
\end{equation}
where $\rho= |w_o| \,.$ Here equality holds, for instance, for the function of Pick, $w=f(z;\rho)$ given by the equation
$$
\frac{4 \rho z}{(1+\rho)^2 (1-z)^2}= \frac{w}{(1-w)^2}  \,.
$$
This function maps the unit disk $U_z$ conformally and univalently onto
the unit disk $U_w$ with the segment $[-1,-\rho]$ removed.
\end{theorem}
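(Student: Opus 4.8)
The plan is to reduce the Schwarzian bound to a lower estimate for the relative capacity of the omitted set and then obtain that estimate by circular symmetrization onto the radial slit of Pick's function. Put $E = U_w \setminus f(U_z)$; should this set fail to have positive distance from $1$, replace it by $(U_w \setminus f(U_z)) \setminus \{ w : |w-1| < \varepsilon \}$ and let $\varepsilon \to 0$ at the very end, exactly as in the proof of Theorem \ref{4.19}. Dividing the inequality of Theorem \ref{4.1} by $(f'(1))^2 = a_1^2 > 0$ rewrites it as
\[
{\rm Re} \frac{S_f(1)}{(f'(1))^2} \le -6\,\rc E ,
\]
so everything comes down to proving the capacity bound $\rc E \ge \tfrac18 \big( \tfrac{1-\rho}{1+\rho} \big)^2$.

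The geometric core is the claim that $E$ meets every circle $|w| = r$ with $\rho \le r < 1$. Since $f$ is univalent, $f(U_z)$ is simply connected, hence $\overline{\mathbb C} \setminus f(U_z)$ is connected; this complement equals $E \cup (\overline{\mathbb C} \setminus U_w)$, so it contains $w_0$ (of modulus $\rho$) and $\infty$. Its image under $w \mapsto |w|$ is therefore a connected subset of $[0,\infty]$ containing $\rho$ and $\infty$, and so meets every value $r \ge \rho$; for $\rho \le r < 1$ the corresponding circle lies in $U_w$, forcing the intersection point into $E$. Consequently the circular symmetrization ${\rm Cr}^-_o \overline{E}$ with respect to the ray $[-\infty,0]$ meets each such circle in a nondegenerate arc centered at $-r$, whence $({\rm Cr}^-_o \overline{E}) \cap U_w$ contains the radial slit $(-1,-\rho]$.

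Applying now the circular-symmetrization inequality \eqref{3.3} of Theorem \ref{3.1} in the disk $U_w$ together with monotonicity (Property \ref{2.8}) yields
\[
\rc E \ge \rc [ ({\rm Cr}^-_o \overline{E}) \cap U_w ] \ge \rc [-1,-\rho].
\]
To finish I identify the right-hand side with the extremal constant. Pick's function $f(\,\cdot\,;\rho)$ maps $U_z$ conformally and univalently onto $U_w \setminus [-1,-\rho]$ and belongs to $\mathcal B$, so the equality case of Theorem \ref{4.1} gives $\rc [-1,-\rho] = -\tfrac16 {\rm Re}\, S_{f(\cdot;\rho)}(1)/(f'(\cdot;\rho)(1))^2$. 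A direct computation starting from the defining relation, conveniently written as $\Phi(w) = c\,\Phi(z)$ with the Koebe-type map $\Phi(\zeta) = \zeta/(1-\zeta)^2$ and $c = 4\rho/(1+\rho)^2$, evaluates this quantity to $\tfrac18 \big( \tfrac{1-\rho}{1+\rho} \big)^2$. Chaining the inequalities (and letting $\varepsilon \to 0$ if the trimming was used) produces \eqref{4.26}, and the chain collapses to equalities exactly when $E$ is already the radial slit, that is, for Pick's function.

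I expect the main obstacle to be this last computation of the Schwarzian of $f(\,\cdot\,;\rho)$ at the boundary point $z=1$. Because the map is given only implicitly through $\Phi$, the cleanest route is the composition formula for the Schwarzian applied to $f(\,\cdot\,;\rho) = \Phi^{-1} \circ (c\,\Phi)$, extracting the expansion coefficients $a_1, a_2, a_3$ at $z=1$ and checking the normalization ${\rm Re}(2a_2 + a_1(1-a_1)) = 0$ that places the function in $\mathcal B$. A secondary, routine point is the $\varepsilon$-trimming limit, which must guarantee that after removing a small disk around $1$ the symmetrized set still captures a radial slit reaching a modulus arbitrarily close to $\rho$.
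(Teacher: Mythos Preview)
Your argument is correct and follows the same core idea as the paper: bound the Schwarzian by the relative capacity via Theorem~\ref{4.1}, push the omitted set to a radial slit by circular symmetrization (inequality~\eqref{3.3}), and read off the extremal constant from Pick's function. Your use of the connectedness of $\overline{\mathbb C}\setminus f(U_z)$ (via univalence) to see that $E$ hits every circle $|w|=r$ with $\rho\le r<1$ is exactly the geometric content the paper exploits.

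The only genuine difference is in the handling of the trimming limit. After removing $\{|w-1|<\varepsilon\}$ you obtain only the compact slit $[-(1-\varepsilon),-\rho]$, so you must still justify $\rc[-(1-\varepsilon),-\rho]\to\rc(-1,-\rho]$; this is indeed routine (convergence of Green functions), but note that the equality case of Theorem~\ref{4.1} does not compute the intermediate capacities for you since $U_w\setminus[-(1-\varepsilon),-\rho]$ is doubly connected. The paper sidesteps this by first composing with the Pick map $f(\cdot;\beta)$: then the omitted set of $g=f(\cdot;\beta)\circ f$ already contains the boundary-touching segment $[-1,-\beta]$, so with $\varepsilon<1-\beta$ the symmetrized trimmed set contains the full slit $[-1,-\rho(\beta)]$ and one can invoke the equality case of Theorem~\ref{4.1} for the Pick function at each stage, taking instead the limit $\beta\to1$ in the Schwarzian. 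Both limiting procedures are legitimate; your direct route is a bit cleaner conceptually, while the paper's composition trick keeps every comparison set simply connected.
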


\begin{proof} The function $g(z)= f(f(z); \beta)$ is in the class $\mathcal{ B}$ for
a fixed $\beta, 0< \beta <1\,.$ Therefore it satisfies the inequality
\eqref{4.20} from the proof of the previous theorem with $f$ replaced with
$g\,.$ For values of $\beta$ close to one, the image of the disk $U_z$ under
$g$ does not contain any of the circles $\{ w:  |w|=r \}, \rho(\beta)<r<1\,,$
where $\lim_{\beta\to 1} \rho(  \beta)= \rho\,.$ Choosing $\varepsilon$ from the
previous proof to be smaller than $1-\beta\,,$ we conclude that the set
 ${\rm Cr}^{-}_{o} \overline{E}$ contains the segment $[-1,-\rho(\beta)]\,.$
From the monotonicity of the capacity and Theorem \ref{4.1} we obtain
$$
\rc [ ({\rm Cr}^-_{o} \overline{E}) \cap U_w] \ge \rc \, (-1,-\rho(\beta)]
= - \frac{1}{6} {\rm Re}  \frac{S_h(1)}{(h'(1))^2}\,,
$$
where $h(z)=f(z;\rho(\beta))\,.$ Comparison with \eqref{4.20} ($f=g$) gives
  $$
{\rm Re}  \frac{S_g(1)}{(g'(1))^2} \le {\rm Re}  \frac{S_h(1)}{(h'(1))^2}  \,.
$$
Letting $\beta \to 1$ and computing the derivative of the function
$f(z; \rho)$ we arrive at the stated inequality. The proof is complete.
\end{proof}

% Cor 4.10
%\begin{corollary}

%\end{corollary}

\begin{theorem}
Let the function $f$ be of class $\mathcal{ B}$ and suppose that the linear Lebesgue measure of the intersection of the image $f(U_z)$ with the imaginary axis is at
most $2t, 0 <t<1\,.$ Then the following inequality holds
$$
{\rm Re}  \frac{S_f(1)}{(f'(1))^2} \le -\frac{3}{2} \left(  \frac{1-t^2}{1+t^2} \right)^2 \,.
$$
The equality holds, for instance, for the function $\tilde{f}_t(z)\,,$ given
by the equation
$$
\frac{2t}{1+t^2} \frac{z}{1-z^2}  =  \frac{w}{1-w^2} \,.
$$
this function maps the unit disk $U_z$ conformally and univalently onto the disk
$U_w$ with the segments $[\pm it, \pm i]\,$ removed.
\end{theorem}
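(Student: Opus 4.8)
The plan is to imitate the proofs of Theorems~\ref{4.19} and~\ref{4.25}, but with the circular symmetrization replaced by the Steiner symmetrization of Theorem~\ref{3.8}, since the present hypothesis bounds a linear measure along the imaginary axis rather than an angular measure on circles. First I would put $E=U_w\setminus f(U_z)$, and, should this set fail to lie at a positive distance from $w=1$, I would work instead with $E_\varepsilon=(U_w\setminus f(U_z))\setminus\{w:|w-1|<\varepsilon\}$ for small $\varepsilon>0$, exactly as in those two proofs. Theorem~\ref{4.1} then gives $-\frac16\,{\rm Re}\,\frac{S_f(1)}{(f'(1))^2}\ge\rc E$, so the whole task reduces to bounding $\rc E$ below by the capacity of the symmetric pair of slits occurring in the extremal configuration.

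The geometric heart of the argument is the Steiner symmetrization ${\rm St}(U_w\setminus E)={\rm St}(f(U_z))$ with respect to the real axis. By its definition, the intersection of ${\rm St}(f(U_z))$ with each vertical line $\lambda(x)$ is a centred interval of the same linear measure as the slice $f(U_z)\cap\lambda(x)$; in particular its slice by the imaginary axis $\lambda(0)$ is an interval $(-i\tau,i\tau)$ of length $\mu(f(U_z)\cap i\mathbb{R})\le 2t$, so that $\tau\le t$. Passing to $E_\varepsilon$ leaves this slice unchanged, because for $\varepsilon<1$ the disk $\{|w-1|<\varepsilon\}$ does not meet the imaginary axis. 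Theorem~\ref{3.8} yields $\rc E\ge\rc[U_w\setminus{\rm St}(f(U_z))]$, and since ${\rm St}(f(U_z))$ meets the imaginary axis only in $(-i\tau,i\tau)$ with $\tau\le t$, the complement $U_w\setminus{\rm St}(f(U_z))$ contains the two segments $[it,i]$ and $[-i,-it]$. By monotonicity (Property~\ref{2.8}) this gives $\rc[U_w\setminus{\rm St}(f(U_z))]\ge\rc([it,i]\cup[-i,-it])$, and the equality case of Theorem~\ref{4.1}, applied to the conformal map $\tilde f_t$ of $U_z$ onto $U_w$ minus exactly these two slits, identifies the last capacity with $-\frac16\,{\rm Re}\,\frac{S_{\tilde f_t}(1)}{(\tilde f_t'(1))^2}$.

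It then remains to compute the extremal constant. Writing $\phi(\zeta)=\zeta/(1-\zeta^2)$ and $c=2t/(1+t^2)$, the defining relation reads $c\,\phi(z)=\phi(w)$; from the expansion $\phi(1-s)=\tfrac1{2s}-\tfrac14-\tfrac{s}{8}+O(s^2)$ about the common fixed point and inversion of the resulting series one obtains the (real) coefficients $a_1=1/c$, $a_2=(1-c)/(2c^2)$, $a_3=(c-1)/(2c^2)$ of $\tilde f_t$. These satisfy ${\rm Re}\,(2a_2+a_1(1-a_1))=0$, so $\tilde f_t\in\mathcal{B}$; substituting into $\frac{S_{\tilde f_t}(1)}{(\tilde f_t'(1))^2}=\frac{6}{a_1^2}\big(\frac{a_3}{a_1}-\frac{a_2^2}{a_1^2}\big)$ collapses to $-\tfrac32(1-c^2)=-\tfrac32\big((1-t^2)/(1+t^2)\big)^2$. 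Combining this value with the chain of inequalities from the first two paragraphs, and recalling the sign reversal introduced by the factor $-\tfrac16$, gives ${\rm Re}\,\frac{S_f(1)}{(f'(1))^2}\le-\tfrac32\big((1-t^2)/(1+t^2)\big)^2$, which is the asserted bound, with equality throughout when $f=\tilde f_t$. I expect the symmetrization chain to be routine; the genuine care lies in the series inversion and in the boundary bookkeeping of the slits---their endpoints $\pm i$ lie on $\partial U_w$, so one must phrase everything in terms of relatively closed subsets of $U_w$ and check that the positive-distance hypothesis of Theorem~\ref{4.1} is met at each stage.
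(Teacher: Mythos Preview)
Your proof is correct and follows the same route as the paper: Theorem~\ref{4.1}, then the Steiner symmetrization inequality (Theorem~\ref{3.8}), then monotonicity, and finally the equality case of Theorem~\ref{4.1} applied to $\tilde f_t$. The paper streamlines one step by taking $E=(U_w\setminus f(U_z))\cap i\mathbb{R}$, which already lies at distance $1$ from the point $1$ (so no $\varepsilon$-cutoff is needed) and makes the Steiner symmetrization completely transparent off the imaginary axis; your explicit series inversion and verification that $\tilde f_t\in\mathcal{B}$ fill in what the paper leaves as ``an easy exercise.''
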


\begin{proof} According to Theorems \ref{4.1} and  \ref{3.8} we have
$$
- \frac{1}{6} \, {\rm Re}  \frac{S_f(1)}{(f'(1))^2} \ge \rc \, E
\ge \rc [  U \setminus {\rm St}(U \setminus E)]
$$
for every set $E$ relatively closed with respect to $U_w$ lying  in the set
$U_w \setminus f(U_z)$ and having a positive distance to the point
$z=1 \,.$ We replace the set $E$ with the intersection of $U_w \setminus f(U_z)$ with the imaginary axis. Then from the monotonicity of the capacity and again
by Theorem \ref{4.1} it follows that
$$
\rc [  U \setminus {\rm St}(U \setminus E)] \ge \rc [  U_w \setminus \tilde{f}_t(U_z)]=
- \frac{1}{6} \, {\rm Re}  \frac{S_{\tilde{f}_t}(1)}{(\tilde{f}_t'(1))^2} \,.
$$
It only remains to compute the derivative on the right hand side
which is an easy exercise. The theorem is proved.
\end{proof}

\bigskip

{\bf Acknowledgements.} The authors are indebted to the referees for their
valuable comments.
This work was completed during the visit of the first author
to the University of Turku, Finland.
\bigskip

\small

\normalsize
\end{document}